\documentclass[11pt]{article}

\usepackage[margin=1.15in]{geometry}
\usepackage{amsmath,amssymb,amsthm,mathtools}
\usepackage{enumitem}
\usepackage[colorlinks=true,citecolor=blue,linkcolor=blue,urlcolor=blue]{hyperref}
\usepackage{microtype}

\newtheorem{theorem}{Theorem}[section]
\newtheorem{lemma}[theorem]{Lemma}

\theoremstyle{definition}

\newtheorem{remark}[theorem]{Remark}

\DeclareMathOperator{\Vol}{Vol}

\DeclareMathOperator{\Hom}{Hom}

\newcommand{\RR}{\mathbb{R}}
\newcommand{\ZZ}{\mathbb{Z}}

\newcommand{\PP}{\mathbb{P}}
\newcommand{\EE}{\mathbb{E}}

\newcommand{\eps}{\varepsilon}

\newcommand{\calE}{\mathcal{E}}

\newcommand{\dd}{\,\mathrm{d}}
\newcommand{\hbin}{h_{\rm bin}}
\newcommand{\qbinom}[2]{\genfrac{[}{]}{0pt}{}{#1}{#2}_q}

\title{Lipschitz Functions on Sparse Graphs II}
\author{Samuel Korsky}
\date{\today}

\begin{document}
\maketitle

\begin{abstract}
\noindent
Korsky, Saffat and Aiylam introduced a growth constant $c(G)$ for integer-valued $h$-Lipschitz functions on a finite graph $G$ and proved that, for $G=G(n,d/n)$,
\[
  \frac{1}{2d}+O(d^{-2})\le \log c(G)\le
  \frac{4\log^2 d}{d}+O(d^{-1})
\]
with high probability.  We sharpen the random-graph part of their result; as $n\to\infty$ and then $d\to\infty$, we prove
\[
  \log c(G)=\frac{\pi^2}{6d}+o(d^{-1})
\]
with high probability.  Additionally, we derive bounds on $\log c(Q_d)$ where $Q_d$ is the $d$-dimensional hypercube graph:
\[
  \frac{\pi^2}{6d}+o(d^{-1}) \le \log{c(Q_d)}\le
  \left(\frac{3}{4} + o(1)\right)\frac{\log d}{d}.
\]
\end{abstract}

\section{Introduction}

Let $G=(V,E)$ be a finite graph and let $h\ge1$ be an integer.  An integer-valued function $f:V\to\ZZ$ is called $h$-Lipschitz if
\[
  |f(u)-f(v)|\le h\qquad\text{for every }uv\in E.
\]
If $G$ has connected components $C_1,\ldots,C_k$, fix one root $r_i\in C_i$ in each component and require $f(r_i)=0$.  Let $N_G(h)$ be the number of such rooted functions.  Korsky, Saffat and Aiylam observed that $N_G(h)$ is an Ehrhart polynomial of degree $|V|-k$ and defined $c(G)$ by
\begin{equation}\label{eq:def-c}
  c(G)^{|V|-k}:=\lim_{h\to\infty}\frac{N_G(h)}{h^{|V|-k}}.
\end{equation}
Equivalently, $c(G)^{|V|-k}$ is the volume of the rooted Lipschitz polytope
\begin{equation}\label{eq:polytope}
  P_G:=\{x\in\RR^V:x(r_i)=0\text{ for all }i,
  \ |x(u)-x(v)|\le1\text{ for all }uv\in E\}.
\end{equation}
This note concerns the random graph $G(n,d/n)$ in the regime where $n\to\infty$ first and $d\to\infty$ afterwards.  The previous bounds from \cite{KorskySaffatAiylam} are
\begin{equation}\label{eq:old-bounds}
  \frac{1}{2d}+O(d^{-2})\le \log c(G(n,d/n))\le
  \frac{4\log^2 d}{d}+O(d^{-1})
\end{equation}
with high probability.  We prove the following first-order asymptotic.

\begin{theorem}\label{thm:main}
For every $\eps>0$ there is $d_0=d_0(\eps)$ such that, for every $d\ge d_0$,
\[
  \lim_{n\to\infty}\PP\left(
  \left|\log c(G(n,d/n))-\frac{\pi^2}{6d}\right|\le\frac{\eps}{d}
  \right)=1.
\]
Equivalently,
\[
  c(G(n,d/n))=1+\frac{\pi^2}{6d}+o_d(d^{-1})
\]
with high probability.
\end{theorem}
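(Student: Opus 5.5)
The plan is to reduce the theorem to a one-dimensional variational problem. We obtain the upper bound from a first-moment (annealed) computation and the lower bound from an explicit family of functions whose volume we can control for a typical $G$.

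\emph{Variational heuristic.} Write $\Vol(P_G)=\int_{\RR^{V\setminus\{r\}}}\prod_{uv\in E}\mathbf 1\{|x_u-x_v|\le 1\}\dd x$ on the giant component. The isolated and small components contribute $O(e^{-d})$ per vertex, so they are negligible. Group configurations by their empirical profile $\rho$, a probability density on $\RR$ modulo translation. Averaging over $G(n,d/n)$ gives
\[
\EE\,\Vol(P_G)\approx \exp\Bigl(n\sup_\rho \Phi_d(\rho)+o(n)\Bigr),
\]
\[
\Phi_d(\rho)=-\int\rho\log\rho\;-\;\frac d2\iint\rho(z)\rho(w)\mathbf 1\{|z-w|>1\}\dd z\dd w .
\]
For large $d$ the optimizer should be essentially uniform on an interval $[0,1]$, with tails of width $\Theta(1/d)$ on each side. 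Rescale the tail as $z=1+y/d$ and put $\rho(z)=1-\sigma(y)$ just inside the edge. Solving the Euler--Lagrange equation $\log\rho(z)=\lambda-d\int\rho(w)\mathbf 1\{|z-w|>1\}\dd w$ self-consistently should produce a double-exponential-type profile. The resulting optimal value should be $\Phi_d=\frac1d\int_0^\infty-\log(1-e^{-t})\dd t+o(1/d)=\frac{\pi^2}{6d}+o(1/d)$, since $\int_0^\infty-\log(1-e^{-t})\dd t=\sum_k k^{-2}$. The first concrete step is to carry out this optimization rigorously, reducing it to a scalar problem for the two (symmetric) tails.

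\emph{Upper bound.} We show $\EE\,\Vol(P_G)\le \exp(n(\sup\Phi_d+o(1/d)))$ by discretizing $\RR$ into cells of mesh $\delta\ll 1/d$ and summing over types. For a fixed type, the probability that no edge joins two cells at distance more than $1$ is at most $\prod(1-d/n)^{\#\text{bad pairs}}$, and the number of configurations is bounded by the multinomial entropy. Boundary effects from the mesh cost $O(\delta d)$ per vertex in the exponent. Markov's inequality then gives $\log c(G)\le \frac{\pi^2}{6d}+\frac{\eps}{d}$ with high probability.

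\emph{Lower bound (main obstacle).} Annealed bounds do not directly transfer to typical $G$, and $\log\Vol$ is not obviously Lipschitz in the edges, so I would avoid second moments and instead construct a product-like subset of $P_G$. Fix the approximate optimizer $\rho^*$. Let the bulk vertices take arbitrary values in $[\theta,1-\theta]$, with $\theta$ a small multiple of $1/d$; this is always feasible. Then let each vertex $v$ move into a tail according to its neighborhood. A vertex may take value $1+y/d$ only if all its neighbors lie above $y/d$, and conditionally on $G$ this is a local constraint. Since $G(n,d/n)$ is locally Poisson$(d)$-tree-like, the log-volume of this sub-polytope equals, up to $o(n/d)$, a sum of local contributions. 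We evaluate it through a local recursion on the Poisson Galton--Watson tree, or more crudely through a Fubini ordering in which tail vertices choose their values after the bulk. Concentration of these local sums by bounded differences on vertex exposure then shows the lower bound matches $\sup\Phi_d$ to within $\eps/d$ with high probability. The delicate point, which I expect to be the hardest, is the interaction between tail vertices that are adjacent to each other. The plan is to show that only an $O(1/d)$-fraction of vertices are in tails, so adjacent tail pairs are rare, at $O(n/d)$ pairs each costing $O(1/d)$ in the exponent. We handle them by simply forbidding both endpoints from the tail, which loses $o(n/d)$ in total.
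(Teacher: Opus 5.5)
\textbf{Upper bound.} There is a genuine gap here. The annealed first moment you propose is not controlled by the local optimizer you identify. Over all profiles, $\sup_\rho\Phi_d(\rho)=+\infty$: for $\rho$ uniform on $[0,L]$ one gets $\Phi_d(\rho)=\log L-\frac d2(1-1/L)^2$, which already exceeds $\pi^2/(6d)$ once $L>e^{d/2}$.

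Concretely, with a single root the integral $\int q^{B(x)}\,dx$ over $\RR^{n-1}$ diverges. Configurations with all gaps larger than $1$ have weight $q^{\binom n2}>0$. Even a flat configuration with one vertex moved far away has weight about $e^{-d}$ on an unbounded set. The true volume excludes such configurations only because the displaced vertices would not lie in the giant component. But $V(C_1)$ is random, and your type count, which keeps only the factor $q^{\#\text{bad pairs}}$ over a fixed vertex set, throws that information away.

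The missing ingredient is a quenched flatness statement. With high probability, any two disjoint vertex sets of size $\lceil Kn\log d/d\rceil$ span an edge, so every Lipschitz function on $C_1$ has all but $R=O(n\log d/d)$ vertices in a unit window. Only then can one take a first moment, over anchored flat configurations. Even so, vertices beyond the first tail layer must be charged through connectivity. The paper attaches them to $(1,2]\cup[-1,0)$ by a forest of present edges, at cost at most $2d\,e^{-(1-o(1))d}$ each.

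Even granting flatness, you would still need $\sup\Phi_d\le\frac{\pi^2}{6d}+o(d^{-1})$ over near-flat profiles. $\Phi_d$ is not concave (the kernel $\mathbf 1\{|x-y|\le1\}$ is not positive definite), so the Euler--Lagrange heuristic does not settle this. Incidentally, its fixed point is the logistic profile $g(y)=(1+e^{-y})^{-1}$, which satisfies $g(-y)=\exp(-\int_{-\infty}^y g)$, not a double exponential. The paper avoids the optimization entirely. The one-tail integral equals $\binom{N+r}{r}^{-1}\qbinom{N+r}{r}$ exactly, the two tails decouple by a Harris--FKG inequality, and the $q$-binomial theorem bounds the two-tail sum by $e^{o(n/d)}/(q;q)_\infty=e^{(\pi^2/6+o(1))n/d}$.

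\textbf{Lower bound.} Your lower bound rests on the assertion that the log-volume of your sub-polytope is, up to $o(n/d)$, a sum of local terms computable on the Poisson Galton--Watson tree. That is essentially a quenched Bethe formula and is the whole difficulty, not a routine step.

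The cruder Fubini version does not reach the constant. Suppose the set of vertices allowed into the tails is fixed before the bulk is chosen. Each such vertex gains only about $2/d$ (its interval has length about $1+2/d$). Once adjacent pairs are removed, there are at most $O(n\log d/d)$ such vertices, for a total of $o(n/d)$. The constant $\pi^2/(6d)$ comes from letting \emph{every} vertex enter a boundary layer when its neighbourhood permits.

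The route you dismissed does work once you truncate. Take $I=[-T/d,1+T/d]$ with $T\to\infty$ and $T^2=o(d)$. Changing the edges from vertex $i$ to earlier vertices changes $\log Z_I(G)$ by at most $\log\frac{1+2T/d}{1-2T/d}=O(T/d)$. The reason is that, for fixed other coordinates, the fibre in $x_i$ is an interval of length between $1-2T/d$ and $1+2T/d$. McDiarmid's inequality together with $\log\EE e^F\le\EE F+\frac18\sum_i c_i^2$ then gives $\log Z_I(G)\ge\log\EE Z_I-O(nT^2/d^2)-o(n/d)$ with high probability. Finally, $\log\EE Z_I\ge n\bigl(H(\rho_d)-\frac d2Q(\rho_d)\bigr)-o(n/d)=\frac{\pi^2}{6d}n-o(n/d)$ follows from the law of large numbers under i.i.d.\ sampling from the logistic profile.
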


\smallskip
\noindent
The lower bound uses a logistic profile which is essentially uniform on an interval of length one and has boundary layers of width $d^{-1}$.  For the random graph, we average first over the independent graph edges and then use bounded differences to pass from the annealed volume to high probability.  The gain in profile entropy is $\pi^2/(3d)$, while the edge-forbidden cost is $\pi^2/(6d)$, leaving $\pi^2/(6d)$.

\bigskip
\noindent
The upper bound is more delicate but has a short final form.  The flatness lemma from \cite{KorskySaffatAiylam} implies that every Lipschitz function has almost all vertices in some interval of length one.  After anchoring that interval as $[0,1]$, the first upper tail is $(1,2]$ and the first lower tail is $[-1,0)$.  For fixed tail sizes $r,s$, the expected tail volume is bounded by
\[
  \frac{q^{rs+r}}{(q;q)_r(q;q)_s},
  \qquad q=1-d/n,
\]
up to $\exp(o_d(n/d))$ and harmless polynomial factors from the choice of the anchor.  Since $q^r\le1$, summing over $r,s$ and using the $q$-binomial theorem gives one $q$-Pochhammer product,
\[
  \sum_{r,s}\frac{q^{rs+r}}{(q;q)_r(q;q)_s}
  \le \sum_{r,s}\frac{q^{rs}}{(q;q)_r(q;q)_s}
  \le \exp(o_d(n/d))\cdot\frac1{(q;q)_\infty}.
\]
Finally,
\[
  -\log{(q;q)_\infty}
  =-\frac nd\int_0^\infty\log{(1-e^{-t})}\,dt+o(n/d)
  =\frac{\pi^2}{6d}\cdot n+o(n/d).
\]
This is the source of the sharp constant.

\bigskip
\noindent
The same logistic profile also gives the lower bound for the hypercube, but the proof is separate: it integrates over one bipartition class and then uses the available intervals on the other class.  The upper bound for the hypercube is different; it uses the homomorphism comparison theorem of Galvin and Tetali \cite{GalvinTetali}.  This yields an auxiliary result,
\[
  \frac{\pi^2}{6d}+o(d^{-1})\le \log c(Q_d)
  \le \left(\frac34+o(1)\right)\frac{\log d}{d}.
\]

\bigskip
\noindent
Typical flatness of Lipschitz functions on expanders was studied by Peled, Samotij and Yehudayoff \cite{PeledSamotijYehudayoff}.  Krueger, Li and Park recently extended this line of work to weak expanders and larger Lipschitz constants \cite{KruegerLiPark}.  The present paper is different in emphasis: it studies the leading Ehrhart-volume growth constant on sparse random graphs.

\bigskip
\noindent
Throughout, $o_d(1)$ denotes a quantity tending to zero as $d\to\infty$, after the limit $n\to\infty$ has been taken.  We write $p=d/n$ and $q=1-p$.

\section{Preliminaries}

\subsection{Volume Normalization}

For a connected graph $H$ and a root $r\in V(H)$, set
\[
  P_H(r):=\{x\in\RR^{V(H)}:x_r=0,
  \ |x(u)-x(v)|\le1\text{ for all }uv\in E(H)\}.
\]
The volume of $P_H(r)$ is independent of the chosen root, since changing the root amounts to changing coordinates in the quotient space $\RR^{V(H)}/\langle\mathbf 1\rangle$ by a determinant-one linear map.  Hence, for connected $H$,
\[
  \Vol(P_H(r))=c(H)^{|V(H)|-1}.
\]
For a disconnected graph, the total rooted volume is the product over connected components.

\bigskip
\noindent
We shall use the following standard facts about $G(n,d/n)$: for fixed $d>1$, with high probability there is a unique giant component $C_1$; its complement has $(\varrho_d+o(1))n$ vertices, where $\varrho_d$ is the solution of $\varrho=e^{-d(1-\varrho)}$; in particular $|V\setminus C_1|=e^{-d+o_d(d)}n$ as $d\to\infty$.  We also use $|E(G)|=(d/2+o(1))n$ with high probability.  See, for example, \cite[Chapters~5--6]{JansonLuczakRucinski}.

\subsection{Two Analytic Estimates}

The following elementary integral occurs in both the lower and upper bounds.

\begin{lemma}\label{lem:zeta-integral}
\[
  -\int_0^\infty \log{(1-e^{-t})}\,dt=\frac{\pi^2}{6}.
\]
\end{lemma}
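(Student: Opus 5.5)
We will prove the identity by expanding the logarithm in a power series and integrating term by term. For $t>0$ we have $0<e^{-t}<1$, so
\[
  -\log(1-e^{-t})=\sum_{k\ge1}\frac{e^{-kt}}{k},
\]
and every term on the right is nonnegative. Tonelli's theorem (equivalently, monotone convergence applied to the partial sums) therefore lets us interchange the sum and the integral without checking any further convergence conditions:
\[
  -\int_0^\infty\log(1-e^{-t})\dd t
  =\sum_{k\ge1}\frac1k\int_0^\infty e^{-kt}\dd t
  =\sum_{k\ge1}\frac1{k^2}.
\]

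The right-hand side is $\zeta(2)=\pi^2/6$ by the classical Basel evaluation, which we will quote. If a self-contained justification is preferred, Parseval's identity for the function $x\mapsto x$ on $(-\pi,\pi)$ gives it in one line.

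There is essentially no obstacle here. The only point that needs care is justifying the interchange of sum and integral near $t=0$, where the integrand has the integrable singularity $-\log(1-e^{-t})\sim -\log t$. Positivity of all the terms handles this at once. In particular, the computation shows a posteriori that the integral is finite, so no separate estimate of the singularity is needed.

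As a remark for later use, the same expansion underlies the asymptotic $-\log(q;q)_\infty\approx \frac{n}{d}\cdot\frac{\pi^2}{6}$ quoted in the introduction. That asymptotic is a Riemann-sum approximation of this integral with mesh $-\log q\approx d/n$.
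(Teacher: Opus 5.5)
Your proof is correct and is the same as the paper's: expand $-\log(1-e^{-t})=\sum_{k\ge1}e^{-kt}/k$, integrate term by term, and recognize $\sum_{k\ge1}k^{-2}=\pi^2/6$. Your use of Tonelli, based on the nonnegativity of the terms, justifies the interchange of sum and integral, which the paper's one-line proof leaves implicit.
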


\begin{proof}
Expand $\log(1-e^{-t})=-\sum_{k\ge1}e^{-kt}/k$ and integrate term by term.
\end{proof}

\smallskip
\noindent
We also need the asymptotics of the $q$-Pochhammer product.  Write
\[
  (q;q)_r:=\prod_{j=1}^r(1-q^j),\qquad
  (q;q)_\infty:=\prod_{j=1}^\infty(1-q^j).
\]

\begin{lemma}\label{lem:qpoch-asymp}
Let $q=1-d/n$, where $d$ is fixed and $n\to\infty$.  Then
\[
  -\log{(q;q)_\infty}
  =\frac{\pi^2}{6d}\cdot n+o(n/d).
\]
The error is uniform for all $d$ in any range with $d=o(n)$.
\end{lemma}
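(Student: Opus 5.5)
We compare $-\log(q;q)_\infty=-\sum_{j\ge1}\log(1-q^j)$ with an integral, using the monotonicity of the summand. Put $\lambda:=-\log q$. Since $q=1-d/n$ with $d=o(n)$, we have $\lambda=\frac dn\bigl(1+O(d/n)\bigr)$. The function $g(x):=-\log(1-e^{-\lambda x})$ is positive and decreasing on $(0,\infty)$, and $-\log(1-q^j)=g(j)$. Comparing the sum with the integral gives
\[
  \int_1^\infty g(x)\dd x\;\le\;\sum_{j\ge1}g(j)\;\le\;\int_0^\infty g(x)\dd x .
\]
The substitution $t=\lambda x$ together with Lemma~\ref{lem:zeta-integral} gives
\[
  \int_0^\infty g(x)\dd x=\frac1\lambda\cdot\frac{\pi^2}{6}=\frac{\pi^2}{6}\cdot\frac nd\bigl(1+O(d/n)\bigr).
\]
The integral converges: near $t=0$ the integrand behaves like $-\log t$, which is integrable, and at infinity it decays exponentially.

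The two bounds differ by $\int_0^1 g(x)\dd x=\frac1\lambda\int_0^\lambda -\log(1-e^{-t})\dd t$. For $t\in(0,1]$ we have $1-e^{-t}\ge t/2$, so the integrand is at most $\log 2-\log t$. Its integral over $[0,\lambda]$ is therefore $O(\lambda+\lambda\log(1/\lambda))$, and after dividing by $\lambda$ the boundary term is $O(1+\log(n/d))$. This is $o(n/d)$ whenever $n/d\to\infty$, i.e.\ whenever $d=o(n)$, and then the gap is uniform in $d$ over that range. The multiplicative correction $1+O(d/n)$ in $1/\lambda$ contributes an additive error $O(1)$, which is also $o(n/d)$. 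Combining, we get $-\log(q;q)_\infty=\frac{\pi^2}{6d}n+O(1+\log(n/d))=\frac{\pi^2}{6d}n+o(n/d)$.

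The main obstacle is the endpoint singularity of $g$ at $0$. The first term $g(1)=-\log(1-q)=\log(n/d)$ is unbounded, so a naive Euler--Maclaurin estimate with an $O(g(0))$ error is useless. The monotone sandwich avoids this problem, because it only needs the integral of $g$ near $0$, which is logarithmically small relative to $n/d$. The remaining routine checks are the expansion $\lambda=-\log(1-d/n)=d/n+O(d^2/n^2)$, and making sure each error term depends only on $n/d$ and not separately on $d$; this is what gives the claimed uniformity.
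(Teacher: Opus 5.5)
Your proof is correct and is essentially the paper's argument: the same monotone sandwich $\int_\lambda^\infty f\le\lambda\sum_{j\ge1}f(\lambda j)\le\int_0^\infty f$ with $f(t)=-\log(1-e^{-t})$ (written in your $x=t/\lambda$ variable), the same boundary term $\lambda^{-1}\int_0^\lambda f=O(\log(1/\lambda))$, and the same correction $1/\lambda=n/d+O(1)$. The only cosmetic difference is that you bound the boundary integral with the explicit inequality $1-e^{-t}\ge t/2$ on $(0,1]$, whereas the paper uses the expansion $f(t)=-\log t+O(t)$.
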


\begin{proof}
Put \(p=d/n\), \(q=1-p\), and \(\lambda=-\log q\). Then \(\lambda=p+O(p^2)\), and \(q^j=e^{-\lambda j}\). Hence
\[
  -\log(q;q)_\infty=\sum_{j=1}^{\infty}-\log(1-q^j)=\sum_{j=1}^{\infty}f(\lambda j),
\]
where \(f(t):=-\log(1-e^{-t})\). The function \(f\) is positive and decreasing on \((0,\infty)\), so the elementary integral comparison
\[
  \int_{\lambda}^{\infty}f(t)\,dt\le \lambda\sum_{j=1}^{\infty}f(\lambda j)\le \int_0^\infty f(t)\,dt
\]
gives
\[
  \sum_{j=1}^{\infty}f(\lambda j)=\frac1\lambda\int_0^\infty f(t)\,dt+O\left(\frac1\lambda\int_0^\lambda f(t)\,dt\right).
\]
Near zero, \(f(t)=-\log t+O(t)\), and therefore
\[
  \int_0^\lambda f(t)\,dt=O(\lambda\log(1/\lambda)).
\]
Thus
\[
  \sum_{j=1}^{\infty}f(\lambda j)=\frac1\lambda\int_0^\infty f(t)\,dt+O(\log(1/\lambda)).
\]
By Lemma~\ref{lem:zeta-integral},
\[
  \int_0^\infty f(t)\,dt=\frac{\pi^2}{6}.
\]
Since \(\lambda=p+O(p^2)\), we have \(1/\lambda=1/p+O(1)\), and hence
\[
  -\log(q;q)_\infty=\frac{\pi^2}{6p}+O(\log(1/p)).
\]
Finally \(p=d/n\), so \(1/p=n/d\), and \(O(\log(1/p))=o(n/d)\). Therefore
\[
  -\log(q;q)_\infty=\frac{\pi^2}{6d}\cdot n +o(n/d),
\]
as claimed.
\end{proof}

\subsection{Concentration Estimates}

We shall use the following standard bounded-differences inequalities in two places.

\begin{lemma}[Bounded differences]\label{lem:bounded-differences}
Let $Y_1,\ldots,Y_m$ be independent random variables, and let $F=F(Y_1,\ldots,Y_m)$ be a real-valued function.  Suppose that changing only the $i$-th coordinate can change $F$ by at most $c_i$.  Then, for every $t>0$,
\[
  \PP(|F-\EE \left[F\right]|\ge t)\le 2\exp\left(-\frac{2t^2}{\sum_i c_i^2}\right).
\]
Moreover,
\[
  \log \EE \left[e^F\right]\le \EE \left[F\right]+\frac18\sum_i c_i^2.
\]
\end{lemma}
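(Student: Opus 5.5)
The statement is McDiarmid's bounded differences inequality together with its moment-generating-function form; I would prove both from the same martingale construction. Let $\mathcal F_i=\sigma(Y_1,\ldots,Y_i)$ and $M_i=\EE[F\mid\mathcal F_i]$, so that $M_0=\EE[F]$ and $M_m=F$. Write $D_i=M_i-M_{i-1}$. Because the $Y_j$ are independent, $M_i$ is obtained by integrating $F$ over $Y_{i+1},\ldots,Y_m$ with $Y_1,\ldots,Y_i$ held fixed. Hence, conditional on $\mathcal F_{i-1}$, the increment $D_i$ ranges over an interval $[A_i,B_i]$, where $A_i$ and $B_i$ are the infimum and supremum over $y$ of
\[
  \EE\bigl[F\mid Y_1,\ldots,Y_{i-1},Y_i=y\bigr]-M_{i-1}.
\]
Two such values differ only in the $i$-th coordinate inside a common integral, so the hypothesis gives $B_i-A_i\le c_i$. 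This use of independence is the only delicate point. It is what lets us couple the two conditional expectations by integrating over the same copy of the remaining variables.

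The key step is Hoeffding's lemma. If $X$ has mean zero and lies in an interval of length $c$, then $\EE[e^{sX}]\le e^{s^2c^2/8}$. To prove it, bound the convex function $e^{sx}$ by its chord on the interval and take expectations. Writing the result as $e^{\phi(u)}$ with $u=sc$, one checks that $\phi(0)=\phi'(0)=0$ and $\phi''\le1/4$, which gives $\phi(u)\le u^2/8$. Apply this conditionally to each $D_i$, which has conditional mean zero, and peel off the factors by the tower property from $i=m$ down to $i=1$. This yields
\[
  \EE\bigl[e^{s(F-\EE F)}\bigr]\le \exp\Bigl(\frac{s^2}{8}\sum_i c_i^2\Bigr)\qquad\text{for all real }s.
\]

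Both conclusions follow from this inequality. Setting $s=1$ and taking logarithms gives $\log\EE[e^F]\le\EE[F]+\frac18\sum_i c_i^2$. For the tail bound, apply Markov's inequality to $e^{s(F-\EE F)}$ to get
\[
  \PP(F-\EE F\ge t)\le\exp\Bigl(-st+\frac{s^2}{8}\sum_i c_i^2\Bigr).
\]
Choosing $s=4t/\sum_i c_i^2$ makes the right-hand side $\exp\bigl(-2t^2/\sum_i c_i^2\bigr)$. The same argument applied to $-F$ bounds the lower tail, and a union bound over the two tails gives the factor $2$.

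Integrability is automatic here: in the paper's applications $F$ is a function of finitely many discrete variables, and more generally the bounded differences make $F-\EE F$ bounded. I expect no real obstacle. The main thing to state carefully is the claim in the first paragraph that the conditional range of $D_i$ has length at most $c_i$, including the measurability needed to define the endpoints $A_i$ and $B_i$. Alternatively, since the lemma is standard, one may simply cite McDiarmid's inequality.
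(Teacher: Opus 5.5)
Your proof is correct and follows the same route as the paper: the Doob martingale $M_i=\EE(F\mid Y_1,\ldots,Y_i)$, Hoeffding's lemma applied conditionally to each increment, multiplication of these bounds via the tower property, and then a Chernoff--Markov step for the tails, which the paper simply cites as the Azuma--Hoeffding/McDiarmid inequality. Your point that $D_i$ lies, conditionally on $Y_1,\ldots,Y_{i-1}$, in an interval of length at most $c_i$ is more careful than the paper's sketch, which only records $|M_i-M_{i-1}|\le c_i$; that weaker bound puts $D_i$ in an interval of length $2c_i$, which yields only $\exp(c_i^2/2)$ from Hoeffding's lemma, so your refinement is exactly what the stated constants $\frac18$ and $2t^2/\sum_i c_i^2$ require.
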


\begin{proof}
Let $M_i=\EE(F\mid Y_1,\ldots,Y_i)$ be the Doob martingale.  Its martingale differences satisfy $|M_i-M_{i-1}|\le c_i$.  The two tail estimates are the Azuma--Hoeffding, or McDiarmid, bounded-differences inequality.  For the exponential-moment estimate, Hoeffding's lemma gives
\[
  \EE\left[e^{M_i-M_{i-1}}\mid Y_1,\ldots,Y_{i-1}\right]
  \le \exp(c_i^2/8).
\]
Multiplying these conditional estimates over $i$ gives
\[
  \EE \left[e^{F-\EE \left[F\right]}\right] \le \exp\left(\frac18\sum_i c_i^2\right),
\]
which is the stated bound.
\end{proof}

\section{The Lower Bound}

The lower bound uses the density
\begin{equation}\label{eq:rho-d}
  \rho_d(x):=\frac{1-e^{-d}}{(1+e^{-dx})(1+e^{d(x-1)})},
  \qquad x\in\RR.
\end{equation}
Its distribution function is
\begin{equation}\label{eq:rho-cdf}
  F_d(x)=\int_{-\infty}^x\rho_d(t)\,dt
  =\frac1d\cdot\log\left(\frac{1+e^{dx}}{1+e^{d(x-1)}}\right).
\end{equation}
In particular, $\int_\RR\rho_d(x)\,dx=1$.  The density is close to $1$ on $[0,1]$, has a logistic left boundary layer near $0$, and has a logistic right boundary layer near $1$.

\bigskip
\noindent
For a probability density $\rho$ on $\RR$, define
\begin{equation}\label{eq:H-Q}
  H(\rho):=-\int_\RR\rho(x)\log\rho(x)\,dx,
  \qquad
  Q(\rho):=\iint_{|x-y|>1}\rho(x)\rho(y)\,dx\,dy.
\end{equation}
Here $H(\rho)$ is the continuous entropy of the profile and $Q(\rho)$ is the probability that a random pair of profile values would violate a $1$-Lipschitz edge constraint.

\begin{lemma}[Logistic boundary layers]\label{lem:logistic}
The density \eqref{eq:rho-d} satisfies
\begin{align}
  H(\rho_d)&=\frac{\pi^2}{3d}+O(d^{-2}),\label{eq:entropy-rho}\\
  Q(\rho_d)&=\frac{\pi^2}{3d^2}+O(d^{-3}).\label{eq:bad-rho}
\end{align}
Consequently,
\begin{equation}\label{eq:profile-gain}
  H(\rho_d)-\frac d2 \cdot Q(\rho_d)=\frac{\pi^2}{6d}+O(d^{-2}).
\end{equation}
\end{lemma}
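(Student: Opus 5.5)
The plan is to compute both quantities directly from the factorization
\[
  \rho_d(x)=(1-e^{-d})\,\sigma(dx)\,\sigma(d(1-x)),\qquad \sigma(u):=\frac1{1+e^{-u}}.
\]
Near $0$ the second logistic factor is $1$ up to an exponentially small error, near $1$ the first one is, and in each boundary layer we rescale by $d$. Both leading constants should reduce to the single integral
\[
  I:=\int_\RR \sigma(u)\log(1+e^{-u})\,du .
\]
I would evaluate $I$ first. Substitute $w=e^{-u}$ and then $s=1/(1+w)$:
\[
  I=\int_0^\infty\frac{\log(1+w)}{w(1+w)}\,dw=\int_0^1\frac{-\log s}{1-s}\,ds=\frac{\pi^2}{6}.
\]
The last equality follows by expanding $1/(1-s)$ as a geometric series, which is equivalent to Lemma~\ref{lem:zeta-integral}.

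\textbf{Entropy.} Write
\[
  -\log\rho_d(x)=-\log(1-e^{-d})+\log(1+e^{-dx})+\log(1+e^{d(x-1)}).
\]
The first term contributes $O(e^{-d})$ to $H(\rho_d)$. For the second term, multiply by $\rho_d$ and integrate. Replacing $\rho_d(x)$ by $\sigma(dx)$ costs a relative factor $1-\sigma(d(1-x))\le e^{-d(1-x)}$, and this is negligible wherever $\log(1+e^{-dx})$ is non-negligible, namely for $x\lesssim 1/2$. After the change of variables $u=dx$, the main part equals $I/d$. The third term is symmetric under $x\mapsto 1-x$, which maps $\rho_d$ to itself, so it also contributes $I/d$. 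This gives $H(\rho_d)=\pi^2/(3d)$ up to an error that is in fact exponentially small, and certainly $O(d^{-2})$.

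\textbf{Bad-pair probability.} By symmetry, $Q(\rho_d)=2\,\PP(Y-X>1)$ for $X,Y$ i.i.d.\ with density $\rho_d$. Set $X=a/d$ and $Y=1+b/d$, so the event becomes $b>a$. In these coordinates the densities are $d^{-1}\rho_d(a/d)\approx d^{-1}\sigma(a)$ and $d^{-1}\rho_d(1+b/d)\approx d^{-1}\sigma(-b)$. Integrating out $a$ first, using $\int_{-\infty}^b\sigma=\log(1+e^b)$, gives
\[
  \PP(Y-X>1)\approx\frac1{d^2}\iint_{b>a}\sigma(a)\sigma(-b)\,da\,db=\frac1{d^2}\int_\RR\sigma(-b)\log(1+e^{b})\,db=\frac{I}{d^2},
\]
where the last step substitutes $u=-b$. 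Hence $Q(\rho_d)=\pi^2/(3d^2)$ to leading order.

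The main obstacle, mild as it is, is justifying that the approximation errors are $O(d^{-3})$. On the region $a\le d/2$ and $b\ge -d/2$, the neglected factors $\sigma(d(1-x))$ and $\sigma(dy)$ differ from $1$ by $O(e^{-d/2})$. On the complementary region, the constraint $b>a$ together with $\sigma(-b)\le e^{-b}$ (or the symmetric bound for $a$ very negative) makes the contribution $O(d\,e^{-d/2})$ after accounting for the linear growth of $\log(1+e^b)$. I would carry out this split explicitly and dominate each piece by exponentially decaying integrands.

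\textbf{Conclusion.} Finally, \eqref{eq:profile-gain} follows by combining the two estimates:
\[
  H(\rho_d)-\frac d2\,Q(\rho_d)=\frac{\pi^2}{3d}-\frac d2\cdot\frac{\pi^2}{3d^2}+O(d^{-2})=\frac{\pi^2}{6d}+O(d^{-2}).
\]
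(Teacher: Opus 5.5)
Your proof is correct and follows essentially the same route as the paper. Both rescale each boundary layer by $d$, replace $\rho_d$ by the corresponding logistic factor, and reduce $H(\rho_d)$ and $Q(\rho_d)$ to $\int_0^1 \frac{-\log s}{1-s}\,ds=\frac{\pi^2}{6}$, with $Q$ computed as twice a one-sided probability after integrating out the variable near $0$ to obtain $\log(1+e^{b})$. Your additive splitting of $-\log\rho_d$ through the factorization $\rho_d(x)=(1-e^{-d})\sigma(dx)\sigma(d(1-x))$ is slightly cleaner bookkeeping than the paper's direct treatment of $-\rho_d\log\rho_d$, and it shows that both errors are actually exponentially small in $d$, stronger than the stated $O(d^{-2})$ and $O(d^{-3})$.
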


\begin{proof}
Near the right endpoint, put $x=1+t/d$.  Then
\[
  \rho_d(1+t/d)=\frac{1}{1+e^t}+O(e^{-d/3})
\]
uniformly for $|t|\le d^{2/3}$.  Near the left endpoint, put $x=t/d$.  Then
\[
  \rho_d(t/d)=\frac{1}{1+e^{-t}}+O(e^{-d/3})
\]
in the same range.  Outside the two $d^{-1}$-scale boundary layers, the density is exponentially close either to $1$ or to $0$.  Hence
\[
  H(\rho_d)=-\frac2d\int_{-\infty}^{\infty}
  \left(\frac{1}{1+e^t}\right)\log\left(\frac{1}{1+e^t}\right)\,dt+O(d^{-2}).
\]
With $u=(1+e^t)^{-1}$, the integral is
\[
  -\int_0^1\frac{\log u}{1-u}\,du=\frac{\pi^2}{6},
\]
which proves \eqref{eq:entropy-rho}.

\bigskip
\noindent
For $Q(\rho_d)$, by symmetry it is enough to compute $\PP(X-Y>1)$ and double it.  Put $X=1+t/d$.  By \eqref{eq:rho-cdf}, the left-tail mass below $t/d$ is
\[
  F_d(t/d)=\frac1d\cdot\log\left(\frac{1+e^t}{1+e^{t-d}}\right)
  =\frac1d\cdot\log(1+e^t)+O(d^{-2})
\]
uniformly over the contributing range.  Therefore
\[
  \PP(X-Y>1)
  =\frac1{d^2}\int_{-\infty}^{\infty}
  \frac{\log(1+e^t)}{1+e^t}\,dt+O(d^{-3})
  =\frac{\pi^2}{6d^2}+O(d^{-3}).
\]
Doubling gives \eqref{eq:bad-rho}.
\end{proof}

\smallskip
\noindent
We next give a random-graph lower-bound lemma.  For a compact interval
\[
  I=I_{d,T}:=[-T/d,1+T/d],
\]
write
\[
  Z_I(G):=\Vol\{x\in I^{[n]}: |x_i-x_j|\le1\text{ for every }ij\in E(G)\}.
\]
This is an unrooted volume.  Rooting is handled by slicing at the end.

\begin{lemma}[Random-graph profile lower bound]\label{lem:random-profile-lower}
Let $T=T(d)\to\infty$ with $T^2=o(d)$, and let $I=I_{d,T}$.  For fixed $d$, as $n\to\infty$, with high probability,
\[
  \log Z_I(G(n,d/n))
  \ge
  n\left(H(\rho_d)-\frac d2 \cdot Q(\rho_d)+o_d(d^{-1})\right).
\]
Here $\rho_d$ may be replaced by its truncation to $I$ and renormalization; this changes $H(\rho_d)$ by $o_d(d^{-1})$ and $Q(\rho_d)$ by $o_d(d^{-2})$.
\end{lemma}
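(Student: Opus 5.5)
The plan is a Gibbs-type variational lower bound, then concentration. Let $\tilde\rho$ be the truncation of $\rho_d$ to $I$, renormalized; by the remark in the statement, it suffices to prove the bound with $\tilde\rho$ in place of $\rho_d$. Let $\mu=\tilde\rho^{\otimes n}$ be the product measure on $I^{[n]}$ and write $\mathbf 1_G(x)$ for the indicator that $|x_i-x_j|\le1$ for all $ij\in E(G)$. Then
\[
  Z_I(G)=\int_{I^n}\mathbf 1_G(x)\dd x
  =\EE_{x\sim\mu}\Bigl[\mathbf 1_G(x)\prod_i\tilde\rho(x_i)^{-1}\Bigr].
\]
Rather than applying Jensen's inequality directly to $\log\mathbf 1_G$, which can be $-\infty$, we use Jensen on the conditioned measure.

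Put $A=\{x:\mathbf 1_G(x)=1\}$. Then
\[
  \log Z_I\ge\log\mu(A)+\EE_\mu\Bigl[-\sum_i\log\tilde\rho(x_i)\,\Big|\,A\Bigr].
\]
The second term requires care. Note that $\tilde\rho$ is bounded above by $1$, so $-\log\tilde\rho\ge0$. Also $-\log\tilde\rho\le T+O(1)$ on $I$, since at distance $T/d$ outside $[0,1]$ the density is about $e^{-T}$. Hence the conditional expectation is at least $nH(\tilde\rho)-(T+O(1))\,n\,d_{TV}(\mu_1|_A,\tilde\rho)$ summed suitably. This is awkward, so instead I would use an annealed argument that avoids the conditioning altogether.

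\textbf{Annealed bound.} Average over the graph first. For fixed $x$,
\[
  \EE_G\mathbf 1_G(x)=\prod_{i<j}\bigl(1-p\,\mathbf 1[|x_i-x_j|>1]\bigr).
\]
Consider $\EE_G\log Z_I(G)$. By Jensen over $x\sim\mu$ for each fixed $G$,
\[
  \log Z_I\ge \EE_\mu\Bigl[\log\mathbf 1_G\Bigr]+nH(\tilde\rho)
\]
fails because of the $-\infty$ issue. So I would lower bound $Z_I$ by a softened count: replace $\mathbf 1_G$ by restricting to a high-probability typical set $S$ of $x$ on which the empirical distribution of the coordinates is close to $\tilde\rho$. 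Then
\[
  \EE_G Z_I(G)\ge\int_S\prod_{i<j}\bigl(1-p\mathbf 1[|x_i-x_j|>1]\bigr)\dd x\approx e^{nH(\tilde\rho)}\exp\Bigl(-\tfrac{d}{2}nQ(\tilde\rho)(1+O(p))\Bigr),
\]
since on $S$ the number of bad pairs is $(Q+o(1))n^2/2$ and $-\log(1-p)=p+O(p^2)$.

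\textbf{From annealed to quenched: the main obstacle.} The remaining step is to pass from $\EE Z$ to a high-probability bound on $\log Z$. Bounded differences over the $\binom n2$ edge indicators is too weak, since each edge can change $\log Z$ by $O(\log d)$ or more. Instead I would expose the graph vertex by vertex, with $Y_i$ the set of edges from $i$ to $\{1,\dots,i-1\}$. Changing $Y_i$ alters $\log Z$ by at most roughly $O(\deg_i\cdot\log(\text{stuff}))$; after capping degrees at $C\log n$ via a negligible modification, this gives $c_i=O(\log n\,\log d)$ and $\sum_i c_i^2=O(n\log^2n)=o(n^2)$.

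This concentrates $\log Z$ around $\EE\log Z$, but what we need is that $\EE\log Z$ is close to $\log\EE Z$. For that I would use the exponential-moment part of Lemma~\ref{lem:bounded-differences}, which gives $\log\EE e^F\le\EE F+\tfrac18\sum c_i^2$ with $F=\log Z$. This yields $\EE\log Z\ge\log\EE Z-o(n^2)$, which is too weak.

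To get an $o(n)$ error, I would apply the argument to the planted, tilted quantity $F=\log\int_S\mathbf 1_G\dd x$ and show that this integral is comparable to its mean. The plan is a second-moment or planted-model argument: under the measure $\mathbf 1_G(x)\dd x\,\PP(G)$ on $S$, the graph is distributed as $G(n,p)$ with edges between bad pairs deleted, which is contiguous at exponential scale. I expect this step, controlling $\EE\log Z$ versus $\log\EE Z$ to accuracy $o(n/d)$, to be the main difficulty. The fallback is to cut $I$ into $d^2$ bins, reduce the problem to counting bin assignments, and apply Azuma to this discrete count, whose per-vertex Lipschitz constant is $O(\log d)$.

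Finally, the $1+O(p)$ and typical-set errors are $o_d(d^{-1})n$, which gives the claim.
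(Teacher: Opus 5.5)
There is a genuine gap, in the step from the annealed bound to a high-probability bound on $\log Z_I$.

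Your annealed step is fine and matches the paper. Averaging over $G$ gives $\int_{I^n}q^{B(x)}\dd x$. On a typical set for $X\sim\tilde\rho^{\otimes n}$ one has $\sum_i-\log\tilde\rho(X_i)\approx nH$ and $B(X)\approx Q\binom n2$. Hence $\log\EE_G Z_I\ge n(H-\frac d2 Q)-o_d(n/d)$.

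The problem comes next. Your vertex-exposure constant $O(\deg_i\cdot\log(\cdot))$ makes $\frac18\sum_i c_i^2$ of order $n\log^2 n$. Then the exponential-moment inequality only gives $\EE\log Z_I\ge\log\EE Z_I-O(n\log^2 n)$, which, as you say, is useless. Neither remedy is carried out, and neither obviously works:
\begin{itemize}
\item The planted/contiguity argument ``at exponential scale'' is asserted, not proved, and would need accuracy $o(n/d)$.
\item The binning fallback has per-vertex constant $O(\log d)$. That gives $\sum_i c_i^2=O(n\log^2 d)$, which still dwarfs the main term $\pi^2 n/(6d)$.
\end{itemize}

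The missing observation is that confining all coordinates to $I$ already makes the per-vertex constant tiny. Fix every coordinate except $x_i$. Since each $x_j\in I=[-T/d,1+T/d]$, the fiber of $x_i$ is $I\cap[\max_{j\in J}x_j-1,\ \min_{j\in J}x_j+1]$ for whatever neighbor set $J$ is present. This fiber always contains $[T/d,1-T/d]$ and is contained in $I$. So its length lies in $[1-2T/d,\,1+2T/d]$, no matter which edges at $i$ are present.

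Changing the exposure block at $i$ only alters constraints involving $i$. Integrating over $x_{-i}$ therefore gives $|\log Z_I(G)-\log Z_I(G')|\le\log\frac{1+2T/d}{1-2T/d}=O(T/d)$, independent of degrees. So $\sum_i c_i^2=O(nT^2/d^2)$, which is $o(n/d)$ exactly because $T^2=o(d)$. Lemma~\ref{lem:bounded-differences} then gives $\EE\log Z_I\ge\log\EE Z_I-o(n/d)$, and concentration of $\log Z_I$ at scale $o(n/d)$. No degree capping, planted model, or discretization is needed.

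A warning sign was that your argument never used the hypothesis $T^2=o(d)$. That hypothesis is in the lemma precisely to make this bounded-differences step close.
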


\begin{proof}
Let $\rho$ be the truncation and renormalization of $\rho_d$ to $I$, and put
\[
  H=H(\rho),\qquad Q=Q(\rho),\qquad W(x)=-\log\rho(x).
\]
For $x\in I^n$, let
\[
  B(x):=\#\{\{i,j\}: |x_i-x_j|>1\}.
\]
Averaging over the random graph gives
\[
  \EE_G \left[Z_I(G)\right]=\int_{I^n} q^{B(x)}\dd x
  =\EE_\rho\left[\exp\left(\sum_{i=1}^n W(X_i)\right)q^{B(X)}\right],
\]
where $X_1,\ldots,X_n$ are independent with density $\rho$.

\bigskip
\noindent
For each fixed $d$, the law of large numbers gives
\[
  \sum_{i=1}^n W(X_i)=nH+o_d(n/d)
\]
with probability tending to one.  Also,
\[
  \EE \left[B(X)\right]=Q\binom n2.
\]
Since $B(X)$ is a sum of bounded pair functions, all summands indexed by disjoint pairs are independent.  Hence only $O(n^3)$ pairs of summands can have nonzero covariance, and
\[
  \operatorname{Var} B(X)=O_d(n^3).
\]
Chebyshev's inequality therefore gives
\[
  B(X)=Q\binom n2+o_d(n^2/d^2)
\]
with probability tending to one, because $n^{3/2}=o(n^2/d^2)$ for each fixed $d$.  Since $\log q=-d/n+O_d(n^{-2})$, on the intersection of these two events we have
\[
  q^{B(X)}
  \ge
  \exp\left(-\frac dn\cdot Q\binom n2-o_d(n/d)\right)
  =\exp\left(-\frac d2 \cdot Qn-o_d(n/d)\right).
\]
Since the intersection of the two typical events has probability tending to one, its logarithm is $o_d(n/d)$.  Therefore
\begin{equation}\label{eq:EZ-random-lower}
  \log \EE_G \left[Z_I(G)\right]
  \ge
  nH-\frac d2\cdot Qn-o_d(n/d).
\end{equation}

\bigskip
\noindent
It remains to pass from the mean to high probability.  Generate $G(n,d/n)$ by independent vertex-exposure blocks, where the block at vertex $i$ consists of the edges from $i$ to earlier vertices.  Let $F(G):=\log Z_I(G)$.  We claim that changing one exposure block changes $F$ by at most $C T/d$.

\bigskip
\noindent
Indeed, suppose that only the block at $i$ is changed.  Fix all coordinates except $x_i$.  The constraints not involving $i$ are unchanged.  The constraints involving $i$ restrict $x_i$ to
\[
  I\cap\bigcap_{j\in J}[x_j-1,x_j+1]
\]
for some set $J$ of neighbors, including the fixed later-neighbor constraints and whichever earlier-neighbor constraints are present.  Since every $x_j\in I=[-T/d,1+T/d]$, this interval has length between $1-2T/d$ and $1+2T/d$.  Hence the two possible fiber lengths, before and after the block change, differ pointwise by a factor at most
\[
  \frac{1+2T/d}{1-2T/d}.
\]
Integrating over the other coordinates gives
\[
  |F(G)-F(G')|
  \le \log\left(\frac{1+2T/d}{1-2T/d}\right)
  \le C\cdot\frac{T}{d}
\]
for all sufficiently large $d$.

\bigskip
\noindent
Applying Lemma~\ref{lem:bounded-differences} with $n$ exposure blocks and $c_i=CT/d$ gives, for every $t>0$,
\[
  \PP\left(|F(G)-\EE_G \left[F(G)\right]|\ge t\right)
  \le 2\exp\left(-\frac{2t^2}{nC^2T^2/d^2}\right).
\]
Taking $t=\eta n/d$ and then letting $n\to\infty$ for each fixed $d$ gives
\[
  F(G)=\EE_G \left[F(G)\right]+o_p(n/d).
\]
The exponential-moment part of Lemma~\ref{lem:bounded-differences} gives
\[
  \log \EE_G \left[e^{F(G)}\right]
  \le
  \EE_G \left[F(G)\right]+O\left(\frac{nT^2}{d^2}\right).
\]
Since $e^{F(G)}=Z_I(G)$ and $T^2=o(d)$, the error term is $o_d(n/d)$.  Combining this with \eqref{eq:EZ-random-lower} yields
\[
  \EE_G \left[F(G)\right]
  \ge
  nH-\frac d2\cdot Qn-o_d(n/d).
\]
Using the concentration estimate for $F(G)$ once more gives the claimed high-probability lower bound.
\end{proof}

\begin{theorem}[Random-graph lower bound]\label{thm:lower}
For every $\eps>0$ there is $d_0=d_0(\eps)$ such that, for every $d\ge d_0$,
\[
  \lim_{n\to\infty}\PP\left(
  \log c(G(n,d/n))\ge\frac{\pi^2/6-\eps}{d}
  \right)=1.
\]
\end{theorem}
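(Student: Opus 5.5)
We combine Lemma~\ref{lem:random-profile-lower} with Lemma~\ref{lem:logistic}, and then convert the unrooted box volume $Z_I(G)$ into the rooted volume $\prod_i c(C_i)^{|C_i|-1}$, where the $C_i$ are the components of $G$. Fix $T=T(d)\to\infty$ with $T^2=o(d)$, for instance $T=d^{1/3}$. By those two lemmas, with high probability
\[
  \log Z_I(G)\ge n\left(\frac{\pi^2}{6d}+o_d(d^{-1})\right).
\]

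The slicing step goes as follows. The set $\{x\in I^{V}:|x_u-x_v|\le1\ \forall uv\in E\}$ factors as a product over components. For a component $C$ with root $r$, we write each point as $x=x_r\mathbf 1+y$ with $y\in P_C(r)$. This change of variables has Jacobian one, and $x_r$ ranges over $I$, which has length $1+2T/d$. Hence the component factor is at most $(1+2T/d)\cdot\Vol(P_C(r))$. It follows that
\[
  \log Z_I(G)\le k\log(1+2T/d)+\sum_i(|C_i|-1)\log c(C_i),
\]
where $k$ is the number of components.

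Now use the component structure of $G(n,d/n)$ recalled in the Preliminaries. With high probability $k\le |V\setminus C_1|+1\le e^{-d+o_d(d)}n$, so $k\log(1+2T/d)=o_d(n/d)$.

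It remains to isolate the giant component. Every non-giant component satisfies $c(C_i)\ge1$, since its polytope contains the unit cube translated appropriately, or more simply since any forest has $c=2$ and extra edges only shrink the polytope. We need an upper bound on these terms so they can be subtracted. For a tree component, $c=2$. In general, $(|C_i|-1)\log c(C_i)\le(|C_i|-1)\log 2$, because deleting edges enlarges the polytope, and a spanning tree gives volume $2^{|C_i|-1}$. The small components therefore contribute at most $|V\setminus C_1|\log2=e^{-d+o_d(d)}n=o_d(n/d)$. Hence
\[
  (|C_1|-1)\log c(C_1)\ge n\left(\frac{\pi^2}{6d}-o_d(d^{-1})\right).
\]

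Finally we interpret $c(G)$ through definition \eqref{eq:def-c}:
\[
  (n-k)\log c(G)=\sum_i(|C_i|-1)\log c(C_i).
\]
This is at least $(|C_1|-1)\log c(C_1)$, since every term is nonnegative. Dividing by $n-k\le n$ gives $\log c(G)\ge \pi^2/(6d)-o_d(d^{-1})$ with high probability. Choosing $d_0(\eps)$ so that the $o_d(d^{-1})$ term is below $\eps/d$ finishes the proof.

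The main obstacle is bookkeeping rather than substance. All the error terms ($o_p(n/d)$ from concentration, the truncation in Lemma~\ref{lem:random-profile-lower}, and the component counts) must be uniform once $n\to\infty$ is taken first for fixed $d$. I would verify this carefully before the slicing step.
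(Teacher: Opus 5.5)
Your proof is correct and is essentially the paper's argument: the Jacobian-one change of variables $x=x_r\mathbf 1+y$ on each component is the paper's slicing by root coordinates, losing a factor $|I|^k$ with $k\log|I|=o_d(n/d)$. Two small points: the detour through the giant component is unnecessary, since you can divide $\sum_i(|C_i|-1)\log c(C_i)\ge\log Z_I(G)-k\log|I|$ by $n-k\le n$ directly; and your ``forest has $c=2$'' remark proves $c(C_i)\le 2$, not $c(C_i)\ge 1$, though the unit-cube argument you give first does establish the nonnegativity you need.
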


\begin{proof}
Choose \(T=T(d)\to\infty\) with \(T^2=o(d)\), for instance \(T=\log d\), and let \(I=I_{d,T}\).  By Lemma~\ref{lem:random-profile-lower} and Lemma~\ref{lem:logistic}, with high probability,
\[
  Z_I(G)\ge
  \exp\left(n\left(\frac{\pi^2}{6d}+o_d(d^{-1})\right)\right).
\]
This is an unrooted volume of valid assignments lying in \(I^n\).

\bigskip
\noindent
Let \(k\) be the number of connected components of \(G\).  With high probability, \(k\le 1+|V\setminus C_1|=e^{-d+o_d(d)}n+1\).  Slicing the unrooted set by the root coordinate in each component loses at most a factor \(|I|^k\).  Since
\[
  \log |I|^k
  =k\log(1+2T/d)=o_d(n/d),
\]
some rooted slice has volume at least
\[
  \exp\left(n\left(\frac{\pi^2}{6d}+o_d(d^{-1})\right)\right).
\]
Translating each component separately sends that slice into the rooted Lipschitz polytope.  Finally, \(|V|-k=n-o_d(n/d)\) on the same exponential scale, so \eqref{eq:def-c} gives the claimed lower bound.
\end{proof}

\section{The Upper Bound}

\subsection{Anchored Flatness}

The upper bound begins with the expansion input already used in \cite{KorskySaffatAiylam}.  We include the short proof for completeness.

\begin{lemma}[Flatness]\label{lem:flatness}
There is an absolute constant $K$ such that the following holds.  Put
\[
  \alpha=\frac{K\log d}{d},\qquad R=\lceil 2\alpha n\rceil.
\]
With high probability, every $1$-Lipschitz function $x$ on the giant component $C_1$ of $G(n,d/n)$ has a vertex $v\in C_1$ such that, after subtracting $x_v$, all but at most $R$ vertices of $C_1$ have values in $[0,1]$.
\end{lemma}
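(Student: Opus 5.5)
We will prove the statement by combining an expansion property of the giant component with a simple counting argument over level sets.

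\textbf{Expansion step.} With high probability $G(n,d/n)$ has the following property, provided $K$ is a large absolute constant. For any two disjoint vertex sets $A,B\subseteq V$ with $|A|,|B|\ge \alpha n/2$, there is at least one edge between $A$ and $B$. We verify this by a first-moment bound. The number of choices of the pair $(A,B)$ is at most $4^n$. For a fixed pair, the probability that there is no edge between $A$ and $B$ is
\[
q^{|A||B|}\le \exp\left(-\frac dn\cdot\frac{\alpha^2n^2}{4}\right)=\exp\left(-\frac{K^2\log^2 d}{4d}\cdot n\right).
\]
This beats $4^n$ only when $K^2\log^2d/(4d)>\log 4$, which fails for large $d$. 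So the plain union bound is too weak. We sharpen it by counting sets with entropy:
\[
\binom{n}{\alpha n/2}^2\le \exp\bigl(\alpha n\log(2e/\alpha)\bigr)=\exp\bigl(O(K\log^2 d/d)\,n\bigr).
\]
Against the probability bound this gives
\[
\exp\left(n\left(\frac{C K\log^2 d}{d}-\frac{K^2\log^2 d}{4d}\right)\right)=o(1)
\]
once $K$ is large. It suffices to treat sets of size exactly $\lceil\alpha n/2\rceil$, since any larger sets contain subsets of that size.

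\textbf{Level-set argument.} Fix a $1$-Lipschitz $x$ on $C_1$. Order the vertices of $C_1$ by value and let $m$ be a median value. Set
\[
L=\{v:x_v<m-\tfrac12\},\qquad U=\{v:x_v>m+\tfrac12\}.
\]
We want a window of length one containing all but $R$ vertices. Let $t_-$ be the $\lceil\alpha n/2\rceil$-th smallest value of $x$, and $t_+$ the $\lceil\alpha n/2\rceil$-th largest. The set of vertices with values $\le t_-$ and the set with values $\ge t_+$ each have size at least $\alpha n/2$. If $t_+-t_->1$, these two sets are disjoint, because any vertex in both would force $t_+\le x_v\le t_-$. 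The expansion step then gives an edge $uv$ between them, with $x_u\le t_-$ and $x_v\ge t_+$, so $|x_u-x_v|>1$. This contradicts the Lipschitz condition. Hence $t_+-t_-\le1$.

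All but at most
\[
2(\lceil\alpha n/2\rceil-1)\le R
\]
vertices then lie in $[t_-,t_+]$, an interval of length at most one. Take $v$ to be a vertex with $x_v=t_-$; such a vertex exists since $t_-$ is attained. After subtracting $x_v$, those vertices lie in $[0,t_+-t_-]\subseteq[0,1]$.

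The median and $L,U$ are not needed; the quantile argument alone suffices. The statement quantifies over all $1$-Lipschitz functions, but the expansion event is a single graph property, so the high-probability claim is uniform in $x$.

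\textbf{Main obstacle.} The main difficulty is the expansion step. A naive union bound over pairs of sets fails for large $d$, so we need the entropy-sensitive count $\binom{n}{\alpha n/2}^2$. This count fixes the scale $\alpha\asymp \log d/d$ with $K$ large; the constant $C$ in the entropy term is absolute, which is what makes a large absolute $K$ sufficient.
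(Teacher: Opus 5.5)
Your proof is correct and follows essentially the paper's route: a first-moment union bound, with the entropy count $\binom{n}{\Theta(\alpha n)}^2$ played against $q^{\Theta(\alpha^2 n^2)}$, shows that disjoint vertex sets of size of order $\alpha n$ are always joined by an edge, and an order-statistic argument on the values of $x$ then finishes the proof. The only cosmetic difference is that you use sets of size $\lceil\alpha n/2\rceil$ and the two symmetric quantiles $t_\pm$, whereas the paper uses sets of size $\lceil\alpha n\rceil$, takes the single $m$-th smallest value $a$, and applies the expansion to $\{x\le a\}$ and $\{x>a+1\}$; both versions leave at most $R$ exceptional vertices.
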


\begin{proof}
Let \(m=\lceil\alpha n\rceil\).  Let \(\calE_\alpha\) be the event that every two disjoint subsets \(A,B\subseteq[n]\) with \(|A|=|B|=m\) have at least one edge between them.  For fixed such \(A,B\),
\[
  \PP(E(A,B)=\varnothing)=(1-d/n)^{m^2}
  \le \exp(-d\alpha^2 n+O(d\alpha)).
\]
The number of possible pairs is at most
\[
  \binom{n}{m}^2\le \exp(2\hbin(\alpha)n+o(n)),
\]
where \(\hbin\) is the binary entropy function.  Since
\[
  \hbin(\alpha)=\alpha\log(1/\alpha)+O(\alpha)
  =K\cdot\left(\frac{\log^2 d+O(\log d\log\log d)}{d}\right)
\]
and
\[
  d\alpha^2=K^2\cdot\frac{\log^2 d}{d},
\]
choosing \(K\) sufficiently large gives \(\PP(\calE_\alpha)\to1\).

\bigskip
\noindent
Assume \(\calE_\alpha\).  Let \(x:C_1\to\RR\) be \(1\)-Lipschitz.  Let \(a\) be the \(m\)-th smallest value of \(x\) on \(C_1\), and choose \(v\in C_1\) with \(x_v=a\).  Then at least \(m\) vertices have value at most \(a\), and at most \(m-1\) vertices have value below \(a\).  If at least \(m\) vertices had value greater than \(a+1\), then the set of vertices with value at most \(a\) and the set of vertices with value greater than \(a+1\) would be two disjoint sets of size at least \(m\) with no edge between them, contradicting \(\calE_\alpha\).  Hence at most \(m-1\) vertices have value greater than \(a+1\).  Therefore all but at most \(2m-2\le R\) vertices lie in \([a,a+1]\).
\end{proof}

\subsection{The One-Tail Integral}

The main enumeration step is a Gaussian-binomial identity.  For $0<q<1$ and integers $N,r\ge0$, define
\[
  A_{N,r}(q):=\int_{[0,1]^N}\int_{[0,1]^r}
  q^{\#\{(i,u):y_u>t_i\}}\,dy\,dt.
\]
The Gaussian binomial coefficient is
\[
  \qbinom{N+r}{r}:=\prod_{j=1}^r\frac{1-q^{N+j}}{1-q^j}.
\]

\begin{lemma}[One-tail $q$-binomial identity]\label{lem:qbinom-integral}
For all $N,r\ge0$,
\begin{equation}\label{eq:A-N-r}
  A_{N,r}(q)=\binom{N+r}{r}^{-1}\qbinom{N+r}{r}.
\end{equation}
Consequently,
\begin{equation}\label{eq:A-upper}
  A_{N,r}(q)
  \le \binom{N+r}{r}^{-1}\prod_{j=1}^r\frac1{1-q^j}.
\end{equation}
\end{lemma}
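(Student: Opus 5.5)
The plan is to reduce the integral to a count over relative orderings.

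\emph{Step 1: reduction to orderings.} Almost surely the $N+r$ values $t_1,\dots,t_N,y_1,\dots,y_r$ are distinct. The integrand depends only on their relative order, because it is determined by the number of pairs $(i,u)$ with $y_u>t_i$. All $(N+r)!$ orderings are equally likely, each with probability $1/(N+r)!$. Within a class of orderings, permuting the labels of the $t$'s among themselves and of the $y$'s among themselves does not change the exponent. So only the interleaving pattern matters: a word in $N$ letters $t$ and $r$ letters $y$, read in increasing order of value. There are $\binom{N+r}{r}$ such words, each equally likely. Hence
\[
  A_{N,r}(q)=\binom{N+r}{r}^{-1}\sum_{w}q^{\operatorname{inv}(w)},
\]
where $\operatorname{inv}(w)$ counts pairs consisting of a $t$ that appears before a $y$ in $w$.

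\emph{Step 2: the generating function.} I would invoke the classical fact that the inversion generating function of words with $N$ letters of one kind and $r$ of the other is the Gaussian binomial $\qbinom{N+r}{r}$. If I prove it in place, I would induct on $N+r$ using the $q$-Pascal recurrence. Conditioning on the last letter of $w$ gives
\[
  S_{N,r}=S_{N,r-1}+q^{r}S_{N-1,r}.
\]
If the last letter is a $y$, it is preceded by all $N$ $t$'s, which contributes $N$ inversions; if it is a $t$, it contributes none. So the recurrence actually reads
\[
  S_{N,r}=q^{N}S_{N,r-1}+S_{N-1,r}.
\]
This is one of the two standard $q$-Pascal rules satisfied by $\qbinom{N+r}{r}$, namely
\[
  \genfrac{[}{]}{0pt}{}{n}{k}_q=q^{n-k}\genfrac{[}{]}{0pt}{}{n-1}{k-1}_q+\genfrac{[}{]}{0pt}{}{n-1}{k}_q
\]
with $n=N+r$ and $k=r$. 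I would check this rule directly from the product formula. The boundary cases $N=0$ or $r=0$ both give $1$. This proves \eqref{eq:A-N-r}.

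\emph{Step 3: the bound.} \eqref{eq:A-upper} follows immediately, since $0<1-q^{N+j}\le1$ bounds each numerator factor by $1$.

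\emph{Main obstacle.} The only real point to watch is the inversion bookkeeping: whether the statistic is ``$t$ before $y$'' or the reverse. By the symmetry $w\mapsto$ reversal, which swaps the two statistics and preserves the count of words, both give the same polynomial, so the identity holds either way.
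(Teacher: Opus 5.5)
Your proposal is correct and follows the paper's proof: reduce to a uniformly random interleaving word, identify the exponent as its inversion number, whose generating function is the Gaussian binomial coefficient, and bound each numerator $1-q^{N+j}$ by $1$. The only difference is that you verify the generating-function identity yourself via the $q$-Pascal rule $S_{N,r}=q^{N}S_{N,r-1}+S_{N-1,r}$, whereas the paper cites it; note that your first displayed recurrence, with the factor $q^{r}$, is also valid, but it comes from conditioning on the first letter rather than the last.
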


\begin{proof}
Order the $N+r$ real variables $t_1,\ldots,t_N,y_1,\ldots,y_r$.  Each relative order is equally likely under Lebesgue measure on $[0,1]^{N+r}$.  The statistic $\#\{(i,u):y_u>t_i\}$ is the inversion number of the corresponding binary word with $N$ letters of one type and $r$ letters of the other type.  The generating function of this inversion statistic is the Gaussian binomial coefficient; see, for example, \cite[Chapter~1]{AndrewsPartitions} or \cite[Section~1.7]{StanleyEC1}.  Dividing by the number $\binom{N+r}{r}$ of binary words gives \eqref{eq:A-N-r}.  The inequality follows from $1-q^{N+j}\le1$.
\end{proof}

\smallskip
\noindent
The following correlation observation lets the two tails decouple before the cross-tail edge penalty is applied.

\begin{lemma}[Opposite monotonicities]\label{lem:monotone}
Fix $N,r,s\ge0$ and $0<q<1$.  For $t\in[0,1]^N$, set
\[
  B_r(t):=\int_{[0,1]^r}q^{\#\{(i,u):y_u>t_i\}}\,dy,
\]
and
\[
  C_s(t):=\int_{[0,1]^s}q^{\#\{(i,w):z_w>1-t_i\}}\,dz.
\]
Then
\[
  \int_{[0,1]^N}B_r(t)C_s(t)\,dt
  \le A_{N,r}(q)A_{N,s}(q).
\]
\end{lemma}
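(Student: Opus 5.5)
The plan is to recognise the inequality as an instance of the Harris (FKG) inequality for product measure on $[0,1]^N$: $B_r$ is coordinatewise nondecreasing in $t$, $C_s$ is coordinatewise nonincreasing, and each integrates separately to the corresponding $A$-value.

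First, I will factor both integrands. For fixed $t$, the exponent $\#\{(i,u):y_u>t_i\}$ is a sum over $u$ of $\#\{i:t_i<y_u\}$. The variables $y_1,\dots,y_r$ therefore separate, and
\[
  B_r(t)=g(t)^r,\qquad g(t):=\int_0^1 q^{\#\{i:\,t_i<y\}}\,dy .
\]
In the same way,
\[
  C_s(t)=h(t)^s,\qquad h(t):=\int_0^1 q^{\#\{i:\,t_i>1-z\}}\,dz .
\]
Both $g$ and $h$ take values in $(0,1]$.

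Next, I will check monotonicity. Since $0<q<1$, increasing a single coordinate $t_i$ can only shrink the set $\{i:t_i<y\}$ for each fixed $y$. So the integrand of $g$ is pointwise nondecreasing in $t_i$, and $g$, hence $B_r=g^r$, is coordinatewise nondecreasing. By the same reasoning, increasing $t_i$ can only enlarge $\{i:t_i>1-z\}$, so $h$ and $C_s=h^s$ are coordinatewise nonincreasing.

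I then apply Harris's inequality on $[0,1]^N$ with Lebesgue (product) measure. For a nondecreasing $f$ and a nonincreasing $g$, both bounded and measurable, it gives $\int fg\le\int f\int g$. If I include a proof, I will argue by induction on $N$. The base case $N=1$ is Chebyshev's integral inequality, which follows from
\[
  \iint (f(a)-f(b))(g(a)-g(b))\,da\,db\le 0 .
\]
For the inductive step, I integrate out the last coordinate with the one-variable inequality. I then observe that the partial integrals $\int f\,dt_N$ and $\int g\,dt_N$ are still monotone in the remaining coordinates, in the same directions, and apply the inductive hypothesis. Boundedness and measurability are immediate here, since $0\le B_r,C_s\le1$ and both are piecewise-constant integrals of indicator-type expressions.

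This yields
\[
  \int_{[0,1]^N} B_rC_s\,dt\le\int_{[0,1]^N} B_r\,dt\cdot\int_{[0,1]^N} C_s\,dt .
\]
By definition, $\int B_r\,dt=A_{N,r}(q)$. For the other factor, the measure-preserving change of variables $t_i\mapsto1-t_i$ turns the condition $z_w>1-t_i$ into $z_w>t_i$, so $\int C_s\,dt=A_{N,s}(q)$. This proves the lemma.

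The only step needing care is the monotonicity check, because the two tails must have genuinely opposite monotonicity for the inequality to point the right way. That check is the short argument given above, so I do not expect a real obstacle anywhere in the proof.
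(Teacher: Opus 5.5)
Your proposal is correct and takes the same approach as the paper. $B_r$ is coordinatewise nondecreasing and $C_s$ is coordinatewise nonincreasing, so the Harris/FKG (Chebyshev) correlation inequality for product Lebesgue measure on $[0,1]^N$ gives $\int B_rC_s\,dt\le\int B_r\,dt\int C_s\,dt$, and you additionally supply two details the paper leaves implicit: the inductive proof of the correlation inequality, and the reflection $t_i\mapsto 1-t_i$ showing $\int C_s\,dt=A_{N,s}(q)$.
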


\begin{proof}
The function $B_r(t)$ is increasing in each coordinate $t_i$, while $C_s(t)$ is decreasing in each coordinate.  Under product Lebesgue measure on $[0,1]^N$, increasing and decreasing functions are negatively correlated.  Applying this elementary form of the FKG/Chebyshev correlation inequality gives the claim.
\end{proof}

\subsection{Summing the Two Tails}

We now prove the summation bound which produces the constant.  Recall that
\[
  (q;q)_r=\prod_{j=1}^r(1-q^j).
\]

\begin{lemma}[$q$-Pochhammer summation]\label{lem:qpoch-sum}
Let $q=1-d/n$ and let $R=O(n\log d/d)$.  Then
\begin{equation}\label{eq:qpoch-sum}
  \sum_{0\le r,s\le R}\frac{q^{rs}}{(q;q)_r(q;q)_s}
  \le \exp(o_d(n/d))\cdot\frac1{(q;q)_\infty}.
\end{equation}
Consequently,
\begin{equation}\label{eq:qpoch-sum-pi}
  \sum_{0\le r,s\le R}\frac{q^{rs}}{(q;q)_r(q;q)_s}
  \le \exp\left(\left(\frac{\pi^2}{6}+o_d(1)\right)\cdot\frac nd\right).
\end{equation}
\end{lemma}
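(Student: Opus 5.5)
The plan is to sum over $s$ exactly for each fixed $r$ using Euler's $q$-binomial theorem, and then to bound the remaining sum over $r$ crudely by a polynomial factor. Polynomial factors are negligible because $d$ is fixed while $n\to\infty$.

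\emph{Inner sum.} Fix $r\ge1$ and drop the constraint $s\le R$, which is allowed since all terms are positive. Euler's identity
\[
  \sum_{s\ge0}\frac{z^s}{(q;q)_s}=\frac{1}{(z;q)_\infty},\qquad |z|<1,
\]
applied with $z=q^r$ gives
\[
  \sum_{s\ge0}\frac{q^{rs}}{(q;q)_s}=\frac1{(q^r;q)_\infty}.
\]
Multiplying by $1/(q;q)_r$ and using the telescoping
\[
  (q;q)_r\,(q^r;q)_\infty=(1-q^r)\,(q;q)_\infty
\]
(the factor $1-q^r$ appears twice), the $r$-th row sum equals
\[
  \frac{1}{(1-q^r)\,(q;q)_\infty}\le\frac{1}{(1-q)\,(q;q)_\infty}=\frac{n}{d}\cdot\frac1{(q;q)_\infty}.
\]

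\emph{The row $r=0$.} Here Euler's identity would diverge, since $z=1$. This is the one point needing care, and it is exactly where the truncation $s\le R$ is used. Each term satisfies $1/(q;q)_s\le 1/(q;q)_\infty$, so this row contributes at most $(R+1)/(q;q)_\infty$.

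\emph{Total and conclusion.} Summing over $0\le r\le R$ gives
\[
  \sum_{0\le r,s\le R}\frac{q^{rs}}{(q;q)_r(q;q)_s}
  \le\Bigl(R+1+\frac{Rn}{d}\Bigr)\frac1{(q;q)_\infty}
  =O(n^2)\cdot\frac1{(q;q)_\infty}.
\]
Since $\log O(n^2)=O(\log n)=o(n/d)$ for each fixed $d$, this proves \eqref{eq:qpoch-sum}. Then \eqref{eq:qpoch-sum-pi} follows immediately from Lemma~\ref{lem:qpoch-asymp}, which gives $-\log(q;q)_\infty=\frac{\pi^2}{6d}n+o(n/d)$.
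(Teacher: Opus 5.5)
Your proposal is correct and follows the paper's proof essentially step for step. You apply Euler's $q$-binomial theorem with $z=q^r$ to each row $r\ge1$ and use $1-q^r\ge 1-q=d/n$; you bound the row $r=0$ trivially by $(R+1)/(q;q)_\infty$; you absorb the resulting polynomial prefactor into $\exp(o(n/d))$; and you finish with Lemma~\ref{lem:qpoch-asymp}.
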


\begin{proof}
We use the $q$-binomial theorem in the form
\begin{equation}\label{eq:q-binomial-theorem}
  \sum_{s=0}^\infty\frac{z^s}{(q;q)_s}=\frac1{(z;q)_\infty},
  \qquad |z|<1.
\end{equation}
For $r\ge1$, take $z=q^r$.  Then
\[
  \sum_{s=0}^\infty\frac{q^{rs}}{(q;q)_s}
  =\frac1{(q^r;q)_\infty}.
\]
Therefore
\[
  \sum_{s=0}^\infty\frac{q^{rs}}{(q;q)_r(q;q)_s}
  =\frac1{(q;q)_r(q^r;q)_\infty}
  =\frac1{(q;q)_\infty(1-q^r)}.
\]
Since $1-q^r\ge 1-q=d/n$, summing over $1\le r\le R$ gives
\[
  \sum_{1\le r\le R}\sum_{s=0}^R\frac{q^{rs}}{(q;q)_r(q;q)_s}
  \le \frac{R}{1-q}\cdot\frac1{(q;q)_\infty}.
\]
The row $r=0$ is bounded by
\[
  \sum_{s=0}^R\frac1{(q;q)_s}\le \frac{R+1}{(q;q)_\infty}.
\]
Since $R=O(n\log d/d)$, the logarithm of the prefactor is $o(n/d)$.  This proves \eqref{eq:qpoch-sum}.  Equation \eqref{eq:qpoch-sum-pi} follows from Lemma~\ref{lem:qpoch-asymp}.
\end{proof}

\subsection{The Anchored Volume Bound}

We next define the random variable whose expectation will bound the volume of all anchored flat functions.  Fix an anchor $a\in[n]$.  For a vector $x\in\RR^n$ with $x_a=0$, partition the vertices into
\begin{align*}
  S(x)&:=\{i:x_i\in[0,1]\},\\
  U(x)&:=\{i:x_i\in(1,2]\},\\
  W(x)&:=\{i:x_i\in[-1,0)\},\\
  D(x)&:=\{i:x_i>2\text{ or }x_i<-1\}.
\end{align*}
We shall only count vectors with $a\in S(x)$ and $|U(x)|+|W(x)|+|D(x)|\le R$.

\bigskip
\noindent
A vector arising from a Lipschitz function on the giant component has the following additional property: every vertex of $D(x)$ is connected, through vertices outside $S(x)$, to $U(x)\cup W(x)$.  Indeed, no edge from $D(x)$ to $S(x)$ can be Lipschitz, so a path from a deep vertex to the central set must first pass through the first upper or lower layer.  To overcount such vectors, we allow a directed forest: every vertex of $D$ chooses a parent in $D\cup U\cup W$, and repeated parent maps eventually reach $U\cup W$.  There are at most $n^{|D|}$ such parent maps.

\bigskip
\noindent
Let $Z_R(G)$ be the sum, over anchors $a$, over all choices of disjoint sets $S,U,W,D$ with $a\in S$ and $|U|+|W|+|D|\le R$, and over all such directed forests from $D$ to $U\cup W$, of the volume of vectors satisfying:
\begin{enumerate}[label=(\roman*)]
\item $x_a=0$;
\item $x_i\in[0,1]$ for $i\in S$, $x_i\in(1,2]$ for $i\in U$, and $x_i\in[-1,0)$ for $i\in W$;
\item for every forest edge $ij$, $|x_i-x_j|\le1$ and the graph edge $ij$ is present;
\item every non-forest graph edge $ij$ with $|x_i-x_j|>1$ is absent.
\end{enumerate}
The definition overcounts the actual anchored Lipschitz volume, which is useful for an upper bound.

\begin{lemma}[Expected anchored flat volume]\label{lem:expected-Z}
Let $R=O(n\log d/d)$.  Then
\begin{equation}\label{eq:expected-Z}
  \EE \left[Z_R(G(n,d/n))\right]
  \le \exp\left(\left(\frac{\pi^2}{6}+o_d(1)\right)\frac nd\right).
\end{equation}
\end{lemma}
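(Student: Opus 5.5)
We bound $\EE[Z_R]$ by linearity of expectation, summing over anchors $a$, over the sets $(S,U,W,D)$, and over forests. The expectation then factors through independent edge indicators. Condition (iii) contributes a factor $p=d/n$ for each forest edge. Condition (iv) contributes a factor $q$ for each non-forest pair $ij$ with $|x_i-x_j|>1$. The expected volume for a fixed configuration is therefore
\[
\Big(\frac dn\Big)^{|D|}\int q^{B'(x)}\,dx,
\]
where $B'$ counts violating non-forest pairs. We then drop all violation penalties except the three kinds that produce the $q$-Pochhammer structure: pairs $(i,u)\in S\times U$ with $x_u>x_i+1$, pairs $(i,w)\in S\times W$ with $x_w<x_i-1$, and pairs $(u,w)\in U\times W$, which always violate since $x_u-x_w>1$. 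Dropping penalties only enlarges the bound. The forest edges all lie inside $D\cup U\cup W$, so none of these pairs is a forest edge.

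For the deep vertices, each $x_i$ with $i\in D$ ranges over an interval of length at most $2$ around its parent's value, integrated leaf-to-root along the forest. Together with the $n^{|D|}$ parent choices and the factor $(d/n)^{|D|}$, this gives at most $(2d)^{|D|}$. The cost of choosing $D$ is $\binom{n}{|D|}$. We cannot afford the crude bound $(2d)^{|D|}\binom{n}{|D|}$, since with $|D|$ as large as $R=O(n\log d/d)$ it is about $e^{O(n\log^2 d/d)}$. The fix is to keep, for each $i\in D$, its penalty against $S$: such a vertex violates against every vertex of $S$, giving a factor $q^{|S|}\approx e^{-d}$. The deep contribution then becomes
\[
\sum_k\binom nk(2d\,e^{-d(1-o(1))})^k\le \exp\big(n\,2d\,e^{-d/2}\big)=e^{o_d(n/d)}.
\]
This handling of the deep layer is the step I expect to need the most care: one must check that $|S|\ge n-R$ keeps the exponent near $d$, and that the forest bookkeeping does not double-count.

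For the two first layers, substitute $x_u=1+y_u$ for $u\in U$ and $x_w=-z_w$ for $w\in W$, with $t=x|_S\in[0,1]^N$ and $N=|S|$. The $U$–$S$ violations are exactly $\#\{(i,u):y_u>t_i\}$, and the $W$–$S$ violations are $\#\{(i,w):z_w>1-t_i\}$. Fixing $x_a=0$ removes one integration variable over $[0,1]$, which only helps. The integral is therefore at most $q^{rs}\int B_r(t)C_s(t)\,dt$, with $r=|U|$ and $s=|W|$. By Lemma~\ref{lem:monotone} and \eqref{eq:A-upper} this is at most
\[
q^{rs}\binom{N+r}{r}^{-1}\binom{N+s}{s}^{-1}\frac{1}{(q;q)_r(q;q)_s}.
\]

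The combinatorial choices of $U$ and $W$ cost $\binom{n}{r}\binom{n-r}{s}$. Since $N\ge n-R-|D|$, the ratio
\[
\frac{\binom nr\binom ns}{\binom{N+r}r\binom{N+s}s}
\]
is at most $\big(n/(N+1)\big)^{r+s}\le(1+O(R/n))^{2R}$. This is $\exp(O(R^2/n))=\exp(O(n\log^2d/d^2))=e^{o_d(n/d)}$. The factor $n$ from anchors and the polynomial number of $(r,s,|D|)$ triples are also $e^{o(n/d)}$. Summing over $0\le r,s\le R$ with Lemma~\ref{lem:qpoch-sum}, i.e. \eqref{eq:qpoch-sum-pi}, gives the bound $\exp((\pi^2/6+o_d(1))n/d)$ claimed in \eqref{eq:expected-Z}.
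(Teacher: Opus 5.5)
Your proposal is correct and follows essentially the same route as the paper: you sum over anchors, sets and forests; keep only the $S$--$U$, $S$--$W$, $U$--$W$ and $D$--$S$ penalties; bound the first layers via Lemma~\ref{lem:monotone} and \eqref{eq:A-upper}; absorb the deep layer through the $q^{mN}\approx e^{-dm}$ factor; and finish with Lemma~\ref{lem:qpoch-sum}. The one slip is the claim that fixing $x_a=0$ ``only helps'' compared with integrating $t_a$ over $[0,1]$, since the anchored slice can be larger (e.g.\ when $r=0$); applying the two lemmas to the $N-1$ free coordinates instead, as the paper does (it also keeps the extra factor $q^r$ from the anchor--$U$ pairs), changes your bound only by the factor $q^r(N+r)(N+s)/N^2=O(1)$, so the conclusion stands.
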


\begin{proof}
Fix an anchor $a$ and fixed disjoint sets $S,U,W,D$ with $a\in S$.  Write
\[
  N=|S|,
  \qquad r=|U|,
  \qquad s=|W|,
  \qquad m=|D|.
\]
First take $m=0$.  For $i\in S\setminus\{a\}$, write $x_i=t_i\in[0,1]$; for $u\in U$, write $x_u=1+y_u$ with $y_u\in[0,1]$; for $w\in W$, write $x_w=-z_w$ with $z_w\in[0,1]$.

\bigskip
\noindent
An edge $ui$ with $u\in U$ and $i\in S\setminus\{a\}$ is forbidden precisely when $y_u>t_i$.  An edge $wi$ with $w\in W$ and $i\in S\setminus\{a\}$ is forbidden precisely when $z_w>1-t_i$.  Every edge between $U$ and $W$ is forbidden, except on a measure-zero boundary, because
\[
  (1+y_u)-(-z_w)=1+y_u+z_w>1.
\]
The anchor contributes asymmetrically: every edge from $a$ to $U$ is forbidden, except on a measure-zero boundary, while every edge from $a$ to $W$ is compatible.  Therefore the expected volume for these fixed sets is at most
\[
  q^{rs+r}\int_{[0,1]^{N-1}}\int_{[0,1]^r}\int_{[0,1]^s}
  q^{\#\{(i,u):y_u>t_i\}+\#\{(i,w):z_w>1-t_i\}}
  \dd z\,\dd y\,\dd t.
\]
By Lemma~\ref{lem:monotone} and Lemma~\ref{lem:qbinom-integral}, the integral is at most
\begin{equation}\label{eq:fixed-nondeep}
  q^{rs+r}
  \binom{N-1+r}{r}^{-1}\binom{N-1+s}{s}^{-1}
  \prod_{j=1}^r\frac1{1-q^j}
  \prod_{j=1}^s\frac1{1-q^j}.
\end{equation}

\smallskip
\noindent
Now sum over choices of the anchor and over choices of $S,U,W$ with $m=0$ and fixed $r,s$.  The number of choices after the anchor is fixed is
\[
  \frac{(n-1)!}{(N-1)!r!s!}.
\]
Multiplying by the two binomial denominators in \eqref{eq:fixed-nondeep} and then summing over the $n$ possible anchors leaves the factor
\[
  n\cdot\frac{(n-1)!}{(N-1)!r!s!}
  \binom{N-1+r}{r}^{-1}\binom{N-1+s}{s}^{-1}
  =\frac{n!(N-1)!}{(N+r-1)!(N+s-1)!}.
\]
Since $r,s\le R=O(n\log d/d)$, Stirling's formula gives
\[
  \log\left(\frac{n!(N-1)!}{(N+r-1)!(N+s-1)!}\right)
  =O\left(\log n+\frac{(r+s)^2}{n}\right)=o_d(n/d).
\]
Thus the total contribution with $m=0$ is at most
\begin{equation}\label{eq:m0-bound}
  \exp(o_d(n/d))
  \sum_{0\le r,s\le R}\frac{q^{rs+r}}{(q;q)_r(q;q)_s}
  \le
  \exp(o_d(n/d))
  \sum_{0\le r,s\le R}\frac{q^{rs}}{(q;q)_r(q;q)_s}.
\end{equation}
By Lemma~\ref{lem:qpoch-sum}, this is at most the right-hand side of \eqref{eq:expected-Z}.

\bigskip
\noindent
It remains to check that deep vertices do not change the exponential rate.  Fix $D$ with $|D|=m$.  Since each vertex in $D$ is at distance more than one from every central vertex in $S$, all edges between $D$ and $S$ must be absent.  This gives a factor
\[
  q^{mN}\le \exp(-(1-o_d(1))dm),
\]
because $N\ge n-R-m\ge(1-o_d(1))n$ for the terms under consideration.  The directed forest from $D$ to $U\cup W$ has at most $n^m$ choices and its edges are present with probability $p^m$, giving a factor at most $d^m$.  Given the non-deep coordinates and the forest, the deep coordinates are parametrized by the forest edge increments, each lying in an interval of length at most $2$; hence their volume is at most $2^m$.

\bigskip
\noindent
After also choosing the set $D$, the total deep multiplier is bounded by
\begin{equation}\label{eq:deep-sum}
  \sum_{m\le R}\binom{n}{m}(2d)^m\exp(-(1-o_d(1))dm).
\end{equation}
Let
\[
  a_d=(2d)\exp(-(1-o_d(1))d).
\]
Then \eqref{eq:deep-sum} is at most
\[
  \sum_{m=0}^n\binom{n}{m}a_d^m=(1+a_d)^n.
\]
Thus its logarithm is at most
\[
  na_d=n\exp(-d+O(\log d))=o_d(n/d).
\]
Multiplying this by the non-deep estimate \eqref{eq:m0-bound} proves \eqref{eq:expected-Z}.
\end{proof}

\subsection{Completing the Upper Bound}

\begin{theorem}[Upper bound]\label{thm:upper}
For every $\eps>0$ there is $d_0=d_0(\eps)$ such that, for every $d\ge d_0$,
\[
  \lim_{n\to\infty}\PP\left(
  \log c(G(n,d/n))\le\frac{\pi^2/6+\eps}{d}
  \right)=1.
\]
\end{theorem}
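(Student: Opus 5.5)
The plan is to combine Lemma~\ref{lem:flatness} with Lemma~\ref{lem:expected-Z} via Markov's inequality. We then handle the small components separately. Work on the high-probability event that Lemma~\ref{lem:flatness} holds, that there is a unique giant component $C_1$ with $|V\setminus C_1|=e^{-d+o_d(d)}n$, and that $|E(G)|=(d/2+o(1))n$.

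The rooted volume factors over components as $\Vol(P_G)=\prod_i\Vol(P_{C_i})$. For the first step, bound the contribution of the non-giant components. Each small component $H$ is connected, so every rooted Lipschitz function on it is determined by the increments along a spanning tree, each lying in $[-1,1]$. Hence $\Vol(P_H)\le 2^{|V(H)|-1}$. The total contribution of all small components is then at most $2^{|V\setminus C_1|}=\exp(e^{-d+o_d(d)}n)=\exp(o_d(n/d))$.

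The second step handles the giant component. Root $C_1$ at the anchor given by flatness. For any $1$-Lipschitz $x$ on $C_1$, Lemma~\ref{lem:flatness} gives a vertex $v$ such that, after translating so that $x_v=0$, all but at most $R$ vertices lie in $[0,1]$. The remaining vertices split into $U,W,D$, and every deep vertex reaches $U\cup W$ by a path avoiding $S$, so it can choose a parent in a directed forest as described before Lemma~\ref{lem:expected-Z}. The translated vector therefore lies in one of the sets summed in $Z_R(G)$, with conditions (iii) and (iv) satisfied by the actual graph.

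Vertices outside $C_1$ need care. Either apply the construction to the induced graph on $C_1$ and note that $Z_R$ for $G$ dominates it after integrating the outside coordinates over bounded ranges, or simply let the outside vertices be placed with values in $[0,1]$ as part of $S$. The cleanest route is to define the comparison so that $\Vol(P_{C_1})\le Z_R(G)\cdot e^{o_d(n/d)}$. Since the volume of $P_{C_1}(v)$ does not depend on the root (Volume Normalization), the union over anchors $v$ is covered by the sum over anchors in $Z_R$. I expect this bookkeeping, that $Z_R(G)$ genuinely dominates $\Vol(P_{C_1})$ up to $e^{o_d(n/d)}$ factors, to be the main obstacle. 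The natural fix is to restrict to $x$ extended by $0$ outside $C_1$, which requires only that non-forest edges with violated constraints are absent, and this holds automatically for Lipschitz $x$ on $C_1$.

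The third step converts expectation into a high-probability bound. By Lemma~\ref{lem:expected-Z} and Markov's inequality,
\[
\PP\left(Z_R(G)\ge \exp\left(\left(\frac{\pi^2}{6}+\frac\eps2\right)\frac nd\right)\right)\le \exp\left(-\left(\frac\eps2-o_d(1)\right)\frac nd\right)\to0
\]
for $d\ge d_0(\eps)$. So with high probability $\log\Vol(P_G)\le(\pi^2/6+\eps/2+o_d(1))n/d$.

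Finally, since $|V|-k\ge |C_1|-1=(1-e^{-d+o_d(d)})n-1$, definition \eqref{eq:def-c} gives
\[
\log c(G)\le \frac{(\pi^2/6+\eps/2+o_d(1))n/d}{(1-o_d(1))n},
\]
which is at most $(\pi^2/6+\eps)/d$ for $d$ large.
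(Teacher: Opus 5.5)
Your proposal is correct and follows the paper's proof essentially step by step. The steps are: flatness gives the covering $\Vol(P_{C_1})\le Z_R(G)$ via a determinant-one change of anchor; Markov's inequality with Lemma~\ref{lem:expected-Z} gives the high-probability bound; the spanning-tree bound $2^{m-1}$ handles the small components; and you normalize by $|V|-k\ge |C_1|-1$.

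On the vertices outside $C_1$, only one of your options gives a volume inequality, and it is the one the paper uses: let them range freely over $[0,1]$ as part of $S$, each contributing a factor one. Extending by $0$ alone produces a null set in $\RR^{n-1}$, so it certifies membership but not volume.
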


\begin{proof}
Let $C_1$ be the giant component.  On the flatness event of Lemma~\ref{lem:flatness}, every $1$-Lipschitz function on $C_1$ is covered by one of the anchored classes counted by $Z_R(G)$: choose the anchor $v$ supplied by Lemma~\ref{lem:flatness}, subtract $x_v$, and place all vertices outside $C_1$ arbitrarily in the central interval $[0,1]$, which has volume one in each coordinate.  Changing from a fixed root to the chosen anchor is a determinant-one coordinate change in the quotient by constant functions.  Thus the rooted volume of $C_1$ is bounded by the anchored flat volume counted by $Z_R(G)$, and
\begin{equation}\label{eq:vol-C1-Z}
  \Vol(P_{C_1})\le Z_R(G)
\end{equation}
on the flatness event.

\bigskip
\noindent
By Lemma~\ref{lem:expected-Z} and Markov's inequality, with high probability
\[
  Z_R(G)\le
  \exp\left(\left(\frac{\pi^2}{6}+\frac{\eps}{2}\right)\frac nd\right)
\]
for all sufficiently large $d$.  Hence
\[
  \Vol(P_{C_1})\le
  \exp\left(\left(\frac{\pi^2}{6}+\frac{\eps}{2}\right)\frac nd\right)
\]
with high probability.

\bigskip
\noindent
The components outside $C_1$ contain $e^{-d+o_d(d)}n$ vertices with high probability.  Since every connected component on $m$ vertices has rooted Lipschitz volume at most $2^{m-1}$, the total contribution of the non-giant components is
\[
  \exp(o_d(n/d)).
\]
Combining this with \eqref{eq:vol-C1-Z} gives
\[
  \Vol(P_G)\le
  \exp\left(\left(\frac{\pi^2}{6}+\eps\right)\frac nd\right)
\]
with high probability.  Since $|V|-k=n-o_d(n/d)$ in the exponent scale relevant here, \eqref{eq:def-c} gives
\[
  \log c(G(n,d/n))\le\frac{\pi^2/6+\eps}{d}
\]
with high probability.
\end{proof}

\begin{proof}[Proof of Theorem~\ref{thm:main}]
The lower bound is Theorem~\ref{thm:lower} and the upper bound is Theorem~\ref{thm:upper}.  Since $e^x=1+x+O(x^2)$ for $x=O(d^{-1})$, the logarithmic statement is equivalent to
\[
  c(G(n,d/n))=1+\frac{\pi^2}{6d}+o_d(d^{-1}).
\]
\end{proof}

\section{Hypercubes}\label{sec:hypercube}

Let $Q_d$ denote the $d$-dimensional hypercube.  We record a lower bound from the logistic profile and an upper bound from the Galvin--Tetali homomorphism comparison theorem \cite{GalvinTetali}.

\begin{lemma}[Neighbor extreme values]\label{lem:neighbor-extremes}
Let $T=T(d)\to\infty$ with $T^2=o(d)$, let
\[
  I=[-T/d,1+T/d],
\]
and let $\rho$ be the truncation and renormalization of $\rho_d$ to $I$.  Let $X_1,\ldots,X_d$ be independent samples from $\rho$, and set
\[
  M=\max_i X_i,
  \qquad
  m=\min_i X_i,
  \qquad
  R=m+1-M.
\]
Then
\[
  \EE \left[R\right]=o_d(d^{-1}),
  \qquad
  \EE \left[R^2\right]=O(d^{-2}),
\]
and consequently
\[
  \EE\left[\log(2-(M-m))\right]=\EE\left[\log(1+R)\right]\ge -o_d(d^{-1}).
\]
\end{lemma}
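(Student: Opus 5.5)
The plan is to show that, after rescaling by $d$, the maximum $M$ sits in a logistic window around $1$ and the minimum $m$ in a logistic window around $0$. The two limiting laws have mean zero, so $\EE[R]$ vanishes at order $d^{-1}$. The logarithmic bound then follows from a second-order Taylor estimate.

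\emph{Step 1: tail functions.} From \eqref{eq:rho-cdf} and the boundary-layer expansions in the proof of Lemma~\ref{lem:logistic}, the truncated, renormalized $\rho$ satisfies, uniformly for $|t|\le T$,
\[
  \PP(X>1+t/d)=\tfrac1d\log(1+e^{-t})+O(d^{-2}),
  \qquad
  \PP(X<t/d)=\tfrac1d\log(1+e^{t})+O(d^{-2}).
\]
The truncation to $I$ only removes mass $O(e^{-T}/d)$ and renormalizes by $1+O(e^{-T}/d)$. Raising to the $d$-th power gives
\[
  \PP(d(M-1)>t)=1-\bigl(1-\PP(X>1+t/d)\bigr)^d\to \frac{e^{-t}}{1+e^{-t}},
  \qquad
  \PP(dm<t)\to\frac{e^{t}}{1+e^{t}},
\]
uniformly on $|t|\le T$. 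So $d(M-1)$ and $dm$ converge in law to standard logistic variables, each of mean $0$.

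\emph{Step 2: first and second moments.} Write
\[
  \EE[d(M-1)]=\int_0^{T}\PP(d(M-1)>t)\,dt-\int_0^{\infty}\PP(d(M-1)<-t)\,dt,
\]
and similarly for $dm$. The variables are confined to the interval of length $T$ around the relevant endpoint on one side. On the other side they are bounded by $O(d)$, but the probability there is superexponentially small: for instance, $\PP(d(M-1)<-t)\le(1-c\,e^{-t}\cdot\tfrac1d\cdot d)^{d}$-type bounds make $\PP(M<1-t/d)\le \exp(-c\,d\,\PP(X>1-t/d))$, which is tiny once $t\gg1$.

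I will prove uniform dominating bounds $\PP(|d(M-1)|>t)\le Ce^{-ct}$ and $\PP(|dm|>t)\le Ce^{-ct}$ for $0\le t\le d$. On the side toward the bulk, this uses $(1-u/d)^d\le e^{-u}$ with $u\gtrsim t$. On the outer side, it uses $1-(1-v)^d\le dv$ with $dv\le\log(1+e^{-t})\le e^{-t}$. Dominated convergence then gives
\[
  \EE[d(M-1)]\to0,\qquad \EE[dm]\to0,\qquad \EE[d^2(M-1)^2],\ \EE[d^2m^2]=O(1).
\]
Hence $\EE[R]=\EE[m]-\EE[M-1]=o_d(d^{-1})$, and $\EE[R^2]\le 2\EE[m^2]+2\EE[(M-1)^2]=O(d^{-2})$.

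\emph{Step 3: the logarithm.} Since all samples lie in $I$, we have $M-m\le1+2T/d$, so $R\ge-2T/d\ge-\tfrac12$ for large $d$. On $[-\tfrac12,\infty)$ we have $\log(1+R)\ge R-R^2$. Taking expectations gives
\[
  \EE[\log(1+R)]\ge \EE[R]-\EE[R^2]=-o_d(d^{-1}).
\]

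\emph{Main obstacle.} The delicate point is Step 2: getting $\EE[R]=o(d^{-1})$ rather than $O(d^{-1})$. This requires the exact cancellation of the two logistic means together with uniform integrability. The uniform integrability has to survive both the $O(d^{-2})$ errors in the tail expansions and the truncation at $\pm T/d$. I would handle it first by establishing the uniform exponential tail bounds directly from the closed form \eqref{eq:rho-cdf}, rather than from the approximations. The symmetry $\rho_d(x)=\rho_d(1-x)$, which also holds for the truncation, shows that $1-M$ has the same law as $m$. This gives $\EE[R]=2\EE[m]$ exactly. It therefore suffices to show $\EE[dm]\to0$, which follows from the logistic limit of $dm$ having mean zero.
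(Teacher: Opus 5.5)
Your proposal is correct and follows essentially the same route as the paper. Both arguments use the logistic extreme-value limit for $d(M-1)$ and $dm$, exponential tail bounds read off from the closed form \eqref{eq:rho-cdf} to control first and second moments, the symmetry $x\mapsto 1-x$ to cancel the two means, and the inequality $\log(1+r)\ge r-r^2$ for $r\ge -1/2$. The differences are cosmetic. You work directly with the truncated law and use tail integrals with dominated convergence, whereas the paper takes the $L^2$ limit for the untruncated law and then compares. Two small inaccuracies are harmless. The lower-side tail is only exponentially, not superexponentially, small. The truncation adds an $O(e^{-T}/d)$ error, not $O(d^{-2})$, to your Step 1 expansions, which is still $o(d^{-1})$.
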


\smallskip
\noindent
\begin{proof}
First use the untruncated density $\rho_d$, and let $\widetilde M$ be the maximum of $d$ independent samples.  For fixed $t\in\RR$, the exact distribution function \eqref{eq:rho-cdf} gives
\[
  F_d(1+t/d)
  =1-\frac{\log(1+e^{-t})}{d}+o(d^{-1}).
\]
Hence
\[
  \PP\bigl(d(\widetilde M-1)\le t\bigr)
  =F_d(1+t/d)^d
  \longrightarrow
  \exp(-\log(1+e^{-t}))
  =\frac1{1+e^{-t}}.
\]
The same formula gives exponential tail bounds.  Indeed, for $t\ge0$,
\[
  \PP\bigl(d(\widetilde M-1)>t\bigr)
  \le d\left(1-F_d(1+t/d)\right)
  \le C e^{-t},
\]
and, for $0\le t\le d/2$,
\[
  \PP\bigl(d(\widetilde M-1)<-t\bigr)
  =F_d(1-t/d)^d
  \le C e^{-t},
\]
while the remaining range $t>d/2$ is exponentially small in $d$.  Thus $d(\widetilde M-1)$ converges in $L^2$ to the standard logistic law, which has mean zero and finite second moment.

\bigskip
\noindent
The probability that one of the $d$ samples lies outside $I$ is $O(e^{-T})$, by the same tail estimates.  Since $T\to\infty$, truncating and renormalizing does not change the $L^2$ limit.  Therefore, under the truncated law,
\[
  d(M-1)\to Y\quad\text{in }L^2,
\]
where $Y$ is standard logistic.  The truncated density is symmetric under $x\mapsto 1-x$, so $dm$ has the same distribution as $-d(M-1)$.  Since $\EE \left[Y\right]=0$ and $\EE \left[Y^2\right]<\infty$,
\[
  \EE \left[R\right]=\EE\left[m+1-M\right]=o_d(d^{-1}),
  \qquad
  \EE \left[R^2\right]=O(d^{-2}).
\]
Finally $R\in[-2T/d,1]$, so for large $d$ we have $R\ge-1/2$.  The elementary inequality
\[
  \log(1+r)\ge r-r^2\qquad(-1/2\le r\le1)
\]
gives
\[
  \EE\left[\log(1+R)\right]\ge \EE \left[R\right]-\EE \left[R^2\right]=-o_d(d^{-1}).
\]
\end{proof}

\begin{theorem}[Hypercube bounds]\label{thm:hypercube}
As $d\to\infty$,
\[
  \frac{\pi^2}{6d}+o(d^{-1})\le \log c(Q_d)
  \le \left(\frac34+o(1)\right)\frac{\log d}{d}.
\]
\end{theorem}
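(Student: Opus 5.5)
For the lower bound I would follow the description in the introduction: integrate over one bipartition class and use the intervals available on the other. Let $V=V_0\sqcup V_1$ be the bipartition of $Q_d$, with $|V_0|=|V_1|=2^{d-1}$. Fix $T=\log d$, the interval $I=[-T/d,1+T/d]$, and the truncated, renormalized profile $\rho$ from Lemma~\ref{lem:neighbor-extremes}.

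For $x\in I^{V_0}$, the set of admissible values at $v\in V_1$ is $\bigcap_{u\sim v}[x_u-1,x_u+1]$, an interval of length $2-(M_v-m_v)$, where $M_v,m_v$ are the max and min of $x$ over the $d$ neighbours of $v$. Hence the unrooted volume is
\[
  Z:=\int_{I^{V_0}}\prod_{v\in V_1}\bigl(2-(M_v-m_v)\bigr)\,dx
  =\EE_\rho\Bigl[\exp\Bigl(\sum_{u\in V_0}W(X_u)+\sum_{v\in V_1}\log(2-(M_v-m_v))\Bigr)\Bigr],
\]
with $W=-\log\rho$ and $X_u$ i.i.d.\ with law $\rho$. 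Jensen's inequality gives
\[
  \log Z\ge 2^{d-1}H(\rho)+2^{d-1}\EE\bigl[\log(2-(M-m))\bigr],
\]
because each $v$ has exactly $d$ distinct neighbours, whose values are therefore $d$ i.i.d.\ samples. Lemma~\ref{lem:neighbor-extremes} bounds the second expectation below by $-o_d(d^{-1})$. Lemma~\ref{lem:logistic}, together with the remark on truncation in Lemma~\ref{lem:random-profile-lower}, gives $H(\rho)=\pi^2/(3d)+o(d^{-1})$. So $\log Z\ge 2^{d}\bigl(\pi^2/(6d)+o(d^{-1})\bigr)$.

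The vertex values in $V_1$ can lie slightly outside $[-1,2]$, but they are still bounded. Slicing at the root then costs at most a factor $3$, i.e.\ an additive $O(1)=o(2^d/d)$ in the logarithm. Dividing by $2^d-1$ gives the lower bound. This half looks routine, since Jensen suffices and no concentration argument is needed.

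For the upper bound I would use Galvin--Tetali. The Lipschitz volume is a limit of counts of homomorphisms into the graph $H_h$ on $\ZZ$, with a loop at each vertex, where $i\sim j$ iff $|i-j|\le h$. After truncating to a finite window, Galvin--Tetali for $d$-regular bipartite graphs gives
\[
  \hom(Q_d,H)\le\hom(K_{d,d},H)^{2^d/(2d)}.
\]
I would truncate to a window of values $\{0,\dots,Lh\}$; flatness, or simply $\mathrm{diam}\,Q_d=d$, bounds the loss from truncation by a polynomial factor per root, which is negligible. The task then reduces to estimating $\hom(K_{d,d},H)$. Scaling to the continuum, I need
\[
  \Vol_{K_{d,d}}=\int_{x\in\RR^{d}}(2-(\max x-\min x))_+^{d}\,dx,
\]
rooted at one coordinate, and I want its logarithm to be at most $(\tfrac32+o(1))\log d$. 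Then the exponent becomes
\[
  \frac{2^d}{2d}\cdot\frac32\log d=2^d\cdot\frac34\cdot\frac{\log d}{d}.
\]

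Computing this integral is the step I expect to be the main obstacle. Writing the spread as $s=\max-\min$, the density of the spread of $d$ rooted points is about $d(d-1)s^{d-2}$ times an interval factor. This leads to
\[
  \int_0^2 d^2 s^{d-2}(2-s)^d\,ds\approx d^2\,2^{2d-1}B(d-1,d+1).
\]
Using $B\approx 4^{-d}\sqrt{\pi/d}$ (up to constants), the $4^d$ cancels exactly, leaving $d^{3/2}$, which is precisely $\tfrac32\log d$.

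I would carry this Beta computation out carefully. I would also handle the loop and integer-versus-continuum normalization of $H_h$ as $h\to\infty$, where the counts scale like $h^{2d-1}$ times the volume. Finally I would check that taking the limit in $h$ commutes with the Galvin--Tetali inequality.
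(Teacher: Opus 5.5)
Your lower bound is correct and uses the paper's construction. Replacing the paper's two bounded-differences concentration steps by Jensen's inequality $\log\EE[e^Y]\ge\EE[Y]$ is valid, since $Y$ is bounded on $I^{V_0}$, and it is a legitimate simplification. The gap is in the upper bound, at the truncation step you call negligible.

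Galvin--Tetali compares \emph{unrooted} homomorphism counts into a fixed finite target. For the window target $H$ on $\{0,\dots,Lh\}$ one has $\Hom(K_{d,d},H)\approx Lh\cdot V_d h^{2d-1}=LV_dh^{2d}$, where $V_d$ is your $\Vol_{K_{d,d}}$. The factor $Lh$ counts placements of the root, and it is missing from your normalization, in which the $K_{d,d}$ count is $h^{2d-1}$ times the volume. Without it, the powers of $h$ on the two sides of Galvin--Tetali do not even match. This factor is raised to the power $2^d/(2d)$. On the $Q_d$ side, however, you recover the rooted count by dividing only once, by the roughly $(L-d)h$ admissible shifts. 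The $h$'s cancel, but you are left with
\[
  \log c(Q_d)\le\frac{\log L+\log V_d}{2d}+o(d^{-1}).
\]
So the window width is not a loss per root: it enters at exactly the scale $\log d/d$ you are trying to pin down.

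Using $\mathrm{diam}\,Q_d=d$ forces $L>d$, because $h$ times the Hamming weight has range $dh$. Then you only get $\frac{\log d+\frac32\log d}{2d}=\frac54\cdot\frac{\log d}{d}$. Reaching $\frac34$ needs $L=d^{o(1)}$, and no deterministic flatness holds on $Q_d$. A theorem that functions of range much larger than $h$ carry a negligible share of the volume would be a substantial extra input, and you have not supplied one.

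The missing idea is to wrap the target around instead of truncating it: use $T_{M,h}$ on $\ZZ/M\ZZ$ with $M=Lh$ for a fixed $L\ge5$. The cycle spaces of $Q_d$ and $K_{d,d}$ are generated by $4$-cycles and $4h<M$, so every homomorphism lifts uniquely to an integer $h$-Lipschitz function of arbitrary range. Hence $\Hom(Q_d,T_{M,h})=M\,N_{Q_d}(h)$ and $\Hom(K_{d,d},T_{M,h})=M\,N_{K_{d,d}}(h)$ exactly. This gives $c(Q_d)^{2^d-1}\le L^{-1}(LV_d)^{2^d/(2d)}$ with $L$ bounded. Your Beta estimate $V_d\sim\sqrt{\pi}\,d^{3/2}$ then yields the constant $\frac34$.
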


\begin{proof}
We first prove the lower bound.  Let $N=2^d$, and write the bipartition of $Q_d$ as $A\cup B$, with $|A|=|B|=N/2$.  By symmetry we may root later at a vertex in $A$.  Choose $T=T(d)\to\infty$ with $T^2=o(d)$, for instance $T=\log d$, and let $\rho$ be the truncation and renormalization of $\rho_d$ to
\[
  I=[-T/d,1+T/d].
\]
For $x\in I^A$ and $b\in B$, define
\[
  \ell_b(x):=\left|\bigcap_{a\sim b}[x_a-1,x_a+1]\right|.
\]
Since all values $x_a$ lie in an interval of length $1+2T/d<2$, the interval above is nonempty and
\[
  1-2T/d\le \ell_b(x)\le 2.
\]
Thus
\[
  Z_Q:=\int_{I^A}\prod_{b\in B}\ell_b(x)\dd x_A
\]
is an unrooted volume of valid Lipschitz assignments on $Q_d$: after choosing the values on $A$, each vertex $b\in B$ may be chosen independently in the interval of length $\ell_b(x)$.

\bigskip
\noindent
Let $X_a$, $a\in A$, be independent with density $\rho$, and put $W=-\log\rho$.  Then
\[
  Z_Q=\EE_\rho\left[\exp\left(\sum_{a\in A}W(X_a)\right)
  \prod_{b\in B}\ell_b(X)\right].
\]
Since $W$ is bounded by $O(T)$ under the truncated law, changing one variable $X_a$ changes $\sum_{a\in A}W(X_a)$ by at most $CT$.  Lemma~\ref{lem:bounded-differences} gives
\[
  \PP\left(\sum_{a\in A}W(X_a)-|A|H(\rho)\le -t\right)
  \le \exp\left(-\frac{2t^2}{(N/2)C^2T^2}\right).
\]
Taking $t=\gamma_d N/d$ with $\gamma_d\downarrow0$ and $\gamma_d^2N/(d^2T^2)\to\infty$ gives
\begin{equation}\label{eq:Q-W-conc}
  \sum_{a\in A}W(X_a)
  \ge |A|H(\rho)-o_d(N/d)
\end{equation}
with probability tending to one.

\bigskip
\noindent
We next control the product of interval lengths.  Fix $b\in B$, and let $M_b$ and $m_b$ be the maximum and minimum of the $d$ neighboring values $X_a$.  Then
\[
  \ell_b=2-(M_b-m_b).
\]
By Lemma~\ref{lem:neighbor-extremes},
\begin{equation}\label{eq:ell-expectation}
  \EE\left[\log\ell_b\right]\ge -o_d(d^{-1}).
\end{equation}
Now set
\[
  S(X):=\sum_{b\in B}\log\ell_b(X).
\]
Linearity of expectation and \eqref{eq:ell-expectation} give
\[
  \EE \left[S\right]\ge -o_d(N/d).
\]
Changing one coordinate $X_a$ affects only the $d$ terms with $b\sim a$.  Each term $\log\ell_b$ lies between $\log(1-2T/d)$ and $\log 2$, so changing one coordinate can change $S$ by at most $C d$.  Applying Lemma~\ref{lem:bounded-differences} with $|A|=N/2$ variables and $c_a=Cd$ gives, for every $t>0$,
\[
  \PP(S-\EE \left[S\right]\le -t)
  \le \exp\left(-\frac{2t^2}{(N/2)C^2d^2}\right).
\]
Choose any $\eta_d\downarrow0$ with $\eta_d^2N/d^4\to\infty$.  Taking $t=\eta_d N/d$ gives
\begin{equation}\label{eq:Q-length-conc}
  S(X)\ge -o_d(N/d)
\end{equation}
with probability tending to one.

\bigskip
\noindent
On the intersection $\calE$ of the high-probability events \eqref{eq:Q-W-conc} and \eqref{eq:Q-length-conc},
\[
  \exp\left(\sum_{a\in A}W(X_a)\right)
  \prod_{b\in B}\ell_b(X)
  \ge
  \exp\left(|A|H(\rho)-o_d(N/d)\right).
\]
Since $\PP_\rho(\calE)\to1$, its logarithm is $o_d(N/d)$.  Therefore
\[
  Z_Q\ge \PP_\rho(\calE)\exp\left(|A|H(\rho)-o_d(N/d)\right)
  =\exp\left(|A|H(\rho)-o_d(N/d)\right).
\]
Since $|A|=N/2$, $H(\rho)=H(\rho_d)+o_d(d^{-1})$, and Lemma~\ref{lem:logistic} gives $H(\rho_d)=\pi^2/(3d)+O(d^{-2})$, we obtain
\[
  Z_Q\ge \exp\left(N\left(\frac{\pi^2}{6d}+o(d^{-1})\right)\right).
\]
Slicing by the coordinate of a root in $A$ loses only a factor $|I|=1+o_d(1)$, so
\[
  \log c(Q_d)\ge\frac{\pi^2}{6d}+o(d^{-1}).
\]

\bigskip
\noindent
We prove the upper bound.  Let $N=2^d$, fix a root $o\in V(Q_d)$, and let $h$ be a positive integer.  Choose a fixed integer $L\ge5$ and set $M=Lh$.  Let $T_{M,h}$ be the graph with vertex set $\ZZ/M\ZZ$, with a loop at every vertex, and with an edge between two vertices when their cyclic distance is at most $h$.

\bigskip
\noindent
We first observe that rooted $h$-Lipschitz functions on $Q_d$ are in bijection with homomorphisms $\varphi:Q_d\to T_{M,h}$ satisfying $\varphi(o)=0$.  Reduction modulo $M$ gives one direction.  Conversely, given such a homomorphism, every oriented edge $uv$ has a unique lift increment $a_{uv}\in[-h,h]\cap\ZZ$ satisfying
\[
  \varphi(v)-\varphi(u)\equiv a_{uv}\pmod M,
\]
because $M>2h$.  Around every $4$-cycle, the sum of the four lifted increments is congruent to $0$ modulo $M$ and has absolute value at most $4h<M$, so it is actually $0$.  Since the cycle space of the hypercube is generated by its $4$-cycles, the lifted increments have zero sum around every cycle.  Integrating them from the root gives a unique rooted integer-valued $h$-Lipschitz function.

\bigskip
\noindent
The target $T_{M,h}$ is vertex-transitive.  Hence the number $N_{Q_d}(h)$ of rooted $h$-Lipschitz functions on $Q_d$ is
\begin{equation}\label{eq:Q-hom-rooted}
  N_{Q_d}(h)=\frac{\Hom(Q_d,T_{M,h})}{M}.
\end{equation}
By the Galvin--Tetali theorem, for every finite $d$-regular bipartite graph $G$ and every finite target graph $H$ with loops allowed,
\[
  \Hom(G,H)\le \Hom(K_{d,d},H)^{|V(G)|/(2d)}.
\]
Applying this with $G=Q_d$ and $H=T_{M,h}$ gives
\begin{equation}\label{eq:GT-Q}
  \Hom(Q_d,T_{M,h})
  \le \Hom(K_{d,d},T_{M,h})^{N/(2d)}.
\end{equation}
The same lifting argument applies to $K_{d,d}$, whose cycle space is also generated by $4$-cycles.  Therefore
\[
  \Hom(K_{d,d},T_{M,h})=M N_{K_{d,d}}(h)
  =L V_d h^{2d}+O_d(h^{2d-1}),
\]
where
\[
  V_d:=\Vol(P_{K_{d,d}})
\]
with one vertex of $K_{d,d}$ rooted.  Combining this with \eqref{eq:Q-hom-rooted} and \eqref{eq:GT-Q}, then taking the leading coefficient of $h^{N-1}$, gives
\begin{equation}\label{eq:Q-coeff-bound}
  c(Q_d)^{N-1}
  \le L^{-1}(L V_d)^{N/(2d)}.
\end{equation}
It remains to estimate $V_d$.

\bigskip
\noindent
Root one vertex in the left part of $K_{d,d}$.  If the values on the left part have range $r\le2$, then every vertex in the right part has an available interval of length $2-r$.  The volume of the set of left-part values with root fixed and range at most $r$ is $d r^{d-1}$; differentiating in $r$ gives
\begin{equation}\label{eq:Kdd-volume}
  V_d=d(d-1)\int_0^2 r^{d-2}(2-r)^d\,dr
  =2^{2d-1}d(d-1)B(d-1,d+1).
\end{equation}
By Stirling's formula,
\[
  V_d\sim \sqrt{\pi}\,d^{3/2}.
\]
Taking logarithms in \eqref{eq:Q-coeff-bound} and using $N=2^d$ yields
\[
  \log c(Q_d)
  \le \frac{\log V_d+O(1)}{2d}+o(1/d)
  =\left(\frac34+o(1)\right)\frac{\log d}{d}.
\]
This proves the theorem.
\end{proof}

\begin{remark}
The hypercube should not be viewed as a random-graph analogue with the same proof.  In the Erdos--Renyi case the sharp upper bound comes from the abundance and near-independence of potential edges between separated level sets.  This mechanism is absent in $Q_d$, whose edges have rigid product structure.  In particular, $Q_d$ admits $1$-Lipschitz functions of large range, so one cannot first prove a global flatness statement of the kind used for random graphs.  The estimates in this section therefore use a different, comparison-based argument.  By contrast, the lower bound uses the logistic profile through the bipartite structure of the cube.
\end{remark}

\section*{Acknowledgements} The author acknowledges GPT-5.5 for producing fully the mechanism of the upper bound for the hypercube graph, and its assistance in performing the detailed computations and preparing an initial draft of this preprint. The proof idea and the direction of the arguments for the sharp random graph bound are due to the author.

\end{document}